\newtheorem{theorem}{Theorem}
\newtheorem{proposition}{Proposition}
\newtheorem{lemma}{Lemma}
\newcommand{\gf}{\mathfrak{g}}
\newcommand{\hf}{\mathfrak{h}}
\newcommand{\pa}{\partial}
\newcommand{\RR}{\mathbb{R}}
\title[Tensor products]{On multiplicities of irreducibles in large tensor product of representations of simple Lie algebras.}
\author{Olga Postnova}
\address{O.P.:Laboratory of Mathematical Problems of Physics,
St. Petersburg Department of Steklov Mathematical Institute,191023, Fontanka 27, St. Petersburg, Russia}
\email{postnova.olga@gmail.com}
\author{Nicolai Reshetikhin}
\address{N.R.: Department of Mathematics, University of California, Berkeley,
CA 94720, USA \& Physics Department, St. Petersburg University, Russia \&KdV Institute for Mathematics, University of Amsterdam,
Science Park 904, 1098 XH Amsterdam, The Netherlands.}
\email{reshetik@math.berkeley.edu}
\begin{document}

\maketitle
\begin{abstract}

In this paper we study the asymptotic of multiplicities 
of irreducible representations in large tensor products of finite dimensional representations 
of simple Lie algebras and their statistics with respect to Plancherel and character probability measures.  We derive the 
asymptotic distribution of irreducible components for the Plancherel measure, generalizing 
results of Biane and Tate and Zelditch. We also derive the asymptotic of the character measure for generic parameters
and an intermediate  scaling in the vicinity of the Plancherel measure.  
It is interesting that the asymptotic measure 
is universal and after suitable renormalization does not depend on which representations were multiplied but depends significantly  on the degeneracy of the parameter in the character distribution.

\end{abstract}

\section{Introduction}
\label{sec:introduction}

\subsection{} The study of the statistics of irreducible components in ``large"
natural representations goes back to works
\cite{LS}\cite{VK1}\cite{VK2}. An example of such large representation is the left regular representation of the symmetric group $S_N$ for large $N$. The Plancherel measure provides a natural probability measure.
The statistics of irreducible components in the left regular representation of $S_N$ with respect to the
Plancherel measure was exactly the focus of  \cite{LS}\cite{VK1}\cite{VK2},\cite{BOO}, see also \cite{BG}\cite{M} and references therein.

The Schur-Weyl duality identifies multiplicities of irreducible components of the $N$-th tensor power of the vector representation of 
$sl_{n+1}$ with dimensions of irreducible representations of $S_N$. The statistics of irreducible  $sl_{n+1}$-components
in this tensor products with respect to the Plancherel measure\footnote{If $W$ is a finite dimensional representation of a simple Lie algebra and $W\simeq\oplus_iV_i^{\oplus m_i}$ is its decomposition into irreducibles, then $p_i=\frac{\dim V_im_i}{\dim W}$ is a probability measure on the set of irreducible components of $W$. We call it the Plancherel measure by the analogy with the Plancherel measure on the left regular representation of a finite group, or compact Lie group.} (\ref{Plan})
for large $N$ was studied by Kerov in \cite{K1}.  Kerov \cite{K1}  discovered that as $N\to \infty$ the discrete
measure (\ref{Plan}) on the space of highest weights $\mathbb{R}^{n}$ of $sl_{n+1}$ converges weakly to the measure on the main Weyl chamber given by the radial part of the natural invariant measure on $sl_{n+1}^*$. Kerov's 
proof was based upon the hook formula
for dimensions of irreducible representations of $S_N$.

Among more recent results involving similar computations are papers \cite{BR1},\cite{BR2} where similar asymptotics
were computed for Lie superalgebras $gl(n|m)$, the paper \cite{NP}, where the asymptotic of multiplicities of irreducible 
$so(2n+1)$-modules in the $N$-th tensor powers of the spinor representation  was studied in the limit $N\to\infty$
and \cite{BG}.
In \cite{B} the asymptotic of multiplicities of irreducible subrepresentations was computed for $V^{\otimes N}$
when $N\to \infty$. These results were extended and strengthened in \cite{TZ}, see also \cite{ST}. 
Problems of this type are known as {\it asymptotic representation theory}.
A survey of this direction in representation theory can be found in the books \cite{K2}, \cite{V}. 

In this paper we derive the asymptotic formula for multiplicities of irreducible representations in tensor powers of finite dimensional representations of simple Lie algebras when the number of factors tends to infinity. The formula
generalizes slightly the one obtained in \cite{TZ},\cite{B}. Here we derive it using a different method, somewhat heuristically. The complete proof is a straightforward extension of results from \cite{TZ}. The large deviation rate function was
also computed in \cite{Du} and the convergence of character measures was studied in \cite{KW}.

\subsection{} Let $\gf$ be a simple Lie algebra, $V_i,\;\;i=1\dots m$ be its finite dimensional representations and $N_k\geq 0$ be integers.  Any finite dimensional representation of a simple Lie algebra is completely reducible and therefore:

\begin{equation} 
\label{tp}
\bigotimes_{k=1}^{m} V_k^{\otimes N_k} \cong \bigoplus_{\lambda}W_{\lambda}(\{V_k\}, \{N_k\}) \otimes V_{\lambda},
\end{equation}
where the sum is taken over irreducible components of the tensor product, $V_{\lambda}$ is the irreducible finite dimensional $\gf$-module with the highest weight $\lambda$ and
$W_{\lambda}(\{V_k\}, \{N_k\})$ is the "space of multiplicities":
\[
W_{\lambda}(\{V_k\}, \{N_k\})\simeq \mathrm{Hom}_{\gf}( \bigotimes_{k=1}^{m}V_k^{\otimes N_k}, V_\lambda).
    \]
Its dimension $m_{\lambda}(\{V_k\}, \{N_k\})$ is the multiplicity of
$V_{\lambda}$ in the tensor product and we have isomorphism of vector spaces.  

Starting from here we assume $V_k=V_{\nu_k}$, otherwise $V_k\simeq\bigoplus V_{\nu}^{\oplus m_{\nu,k}}$.
We will use notations $\chi_\lambda(\gf)=tr_{V_\lambda(\gf)}$ for characters of irreducible representations.

Choose a Borel subalgebra $\bf\in \gf$ and let $\Delta_+$ be corresponding positive roots, $\hf$ be
the corresponding Cartan subalgebra and $\alpha_1,\dots, \alpha_r$ be enumerated fundamental roots.
Here $r=rank(\gf)=dim(\hf)$ is the rank of the Lie algebra $\gf$.
Let $\gf_{\mathbb{R}}$ be the split real form of $\gf$ and $\hf_{\RR}$ be its Cartan subalgebra. When $g=e^t,\;\;t \in \subset\hf_{\mathbb{R}}\subset\gf_{\mathbb{R}}$ the characters $\chi_{\nu_k}(e^t)$ and $\chi_\lambda(e^t)$ are positive and
as a consequence of the tensor product decomposition we have the identity
    \[
        \prod_k \chi_{\nu_k}(e^t)^{N_k}=\sum_{\lambda} m_{\lambda}(\{V_k\}, \{N_k\})\chi_\lambda(e^t) .
    \]

Therefore
\begin{equation}
  \label{stat}
   p_\lambda^{(N)}(t) =  \frac{m_{\lambda}(\{V_k\}, \{N_k\})\chi_\lambda(e^t)}{\prod_k \chi_{\nu_k}(e^t)^{N_k}}
\end{equation}
is a natural probability measure on irreducible components of tensor product: $p_\lambda^{(N)}(t)\geq 0$ and $\sum_\lambda p_\lambda^{(N)}(t)=1$. We will call it {\it the character measure}.

We  will also assume\footnote{This is a convenience assumption, the irreducible characters restricted
to the Cartan subgroup are 
invariant with respect to the action of the Weyl group, so we can always choose a representative
of the orbit of $W$ through $t$ which is positive.} that components of $t$ in the basis of simple roots
are nonnegative, i.e. $t\geq 0$.

In this paper we study the asymptotical behavior of multiplicities $m_\lambda$  in the limit when $N_k\to \infty$ and $\lambda \to\infty$ such that $N_k=\tau_k/\epsilon$ and $\lambda=\xi/\epsilon,\;\;\epsilon\to 0$, where $\tau_k\in \RR_{\geq 0}$ and $\xi\in \hf^*_{\geq 0}$. As a consequence we will describe
the large deviation type asymptotic of the probability measure $\{p_\lambda\}$.

\subsection{} To state main results of the paper we need a few definitions.
First, define 
\[
    f(\tau,t)=\sum_k\tau_k\ln(\chi_{\nu_k}(e^t)) .
\]
It is a strictly convex function of $t$. Let $S(\tau, \xi)$ be the  Legendre transform of $f(\tau, y)$
\begin{equation}\label{Sf}
    S(\tau,\xi)=\min_y\left(f(\tau,y)-(y,\xi)\right)=
    f(\tau,x)-(x,\xi),
\end{equation}
where $(y,\xi)$ is the Killing form. In the basis of simple roots $\xi=\sum_a{\xi_a\alpha_a}, y=\sum_ay_a\alpha_a$ and $(y,\xi)=\sum_{ab}y_aB_{ab}\xi_b$, where $B_{ab}=d_aC_{ab}=(\alpha_a,\alpha_b)$ is the symmetrized Cartan matrix and $x$ is the critical point where the minimum is achieved. The critical point $x$ is the unique solution to the equation:
\begin{equation}
\label{tt}
\frac{\partial}{\partial x_a}f(\tau,x)=\sum_b B_{ab}\xi_b.
\end{equation}
Define the matrix $K$ as
\[
K_{ac}=-\frac{\pa^2 S(\tau, \xi)}{\pa \xi_a\pa \xi_b},
\]
we will see that 
\[
K_{ac}=   \sum_{b,d}B_{ad}(D^{-1})_{db}B_{bc},
\]
where
\begin{equation*}
    D_{ab}=\frac{\partial^{2}}{\partial y_a \partial y_b} f(\tau,y)|_{y=x},
\end{equation*}
where $x$ is as above. 

Recall that $y\in \hf$ is called {\it regular} if
the stabilizer of $y$ in the Weyl group $W$ acting on $\hf$ is trivial. The following theorem 
extends the result of \cite{B},\cite{TZ} where it was proven for $m=1$. 

\begin{theorem} \label{con-1}If $\xi=\epsilon \lambda$ remain regular as $\epsilon\to 0$ 
the asymptotic of the multiplicity of $V_\lambda$ in (\ref{tp}) has the following form
\[
    m_{\lambda}(\{V_k\}, \{N_k\})=
    \epsilon^{\frac{r}{2}}\frac{\sqrt{detK}}{(2\pi)^{\frac{r}{2}} }\Delta(x)e^{-(\rho,x)}
e^{\frac{1}{\epsilon}S(\tau,\xi)}(1+O(\epsilon))
\]
Here $x\in \hf$ is the Legendre image of $\xi\in \hf^*$, the functions $S$ and the matrix $K$ are
as above and $\Delta(x)$ is the denominator in the Weyl formula for characters:
\[
\Delta(x)=\prod_{\alpha\in \Delta_+} (e^{\frac{(x,\alpha)}{2}}-e^{-\frac{(x,\alpha)}{2}})
\]
\end{theorem}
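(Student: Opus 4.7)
The plan is to extract $m_{\lambda}$ as a contour integral over the Cartan subalgebra and evaluate it by a multidimensional saddle point method. To set up the integral representation, I will multiply the decomposition identity $\prod_k \chi_{\nu_k}(e^{t})^{N_k}=\sum_\mu m_\mu \chi_\mu(e^{t})$ by the Weyl denominator and use Weyl's formula $\Delta(t)\chi_\mu(e^{t})=\sum_{w\in W}(-1)^{\ell(w)}e^{(w(\mu+\rho),t)}$. Setting $t=i\theta$ with $\theta\in\hf_{\RR}$ and integrating over the compact torus $T=i\hf_{\RR}/(2\pi i\Lambda^{\vee})$ picks out the Fourier coefficient of $e^{i(\lambda+\rho,\theta)}$; regularity of $\xi$ makes $\lambda+\rho$ strictly dominant, so only the pair $(\mu,w)=(\lambda,e)$ contributes and I obtain the exact representation
\[
m_{\lambda}=\frac{1}{\mathrm{vol}(T)}\int_{T}e^{-i(\lambda+\rho,\theta)}\,\Delta(i\theta)\prod_k\chi_{\nu_k}(e^{i\theta})^{N_k}\,d\theta.
\]

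The next step is to evaluate this integral by steepest descent. With $N_k=\tau_k/\epsilon$ and $\lambda=\xi/\epsilon$, the integrand equals $\exp\Phi_\epsilon(i\theta)$ where
\[
\Phi_\epsilon(z)=\frac{1}{\epsilon}\bigl[f(\tau,z)-(z,\xi)\bigr]-(\rho,z)+\ln\Delta(z).
\]
Since the integrand is holomorphic and periodic on $\hf_{\mathbb{C}}$, I will shift the integration chain from $i\hf_{\RR}$ to the parallel chain $x+i\hf_{\RR}$, where $x\in\hf_{\RR}$ is the real saddle determined by~(\ref{tt}). At this saddle the leading exponent is exactly $S(\tau,\xi)/\epsilon$ by the definition~(\ref{Sf}), and the $\epsilon$-independent prefactor at $\theta=0$ is $e^{-(\rho,x)}\Delta(x)$, which is nonzero because the regularity of $\xi$ places $x$ in the interior of a Weyl chamber. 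Expanding $f(\tau,x+i\theta)-(\xi,x+i\theta)$ around $\theta=0$, the linear term vanishes by~(\ref{tt}) and the quadratic term equals $-\tfrac{1}{2}\theta\cdot D\cdot\theta$ with $D$ positive definite; Gaussian integration produces the factor $(2\pi\epsilon)^{r/2}/\sqrt{\det D}$. Using the envelope relation $\partial_\xi S=-Bx$ together with differentiating~(\ref{tt}) with respect to $\xi$ yields $K=BD^{-1}B$, so $(\det D)^{-1/2}=\sqrt{\det K}/|\det B|$. With the standard normalization $\mathrm{vol}(T)=(2\pi)^r/|\det B|$ the prefactor collapses to $\epsilon^{r/2}\sqrt{\det K}/(2\pi)^{r/2}$, matching the claim.

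The principal obstacle will be justifying the saddle point estimate rigorously: I need uniform control of the integrand on $x+iT$ outside a neighborhood of $\theta=0$, and a bound $1+O(\epsilon)$ on the multiplicative error coming from the cubic remainder of order $\theta^{3}/\epsilon$ and the smooth prefactors $\Delta(x+i\theta)e^{-i(\rho,\theta)}$. The decisive input is the elementary inequality $|\chi_{\nu_k}(e^{x+i\theta})|\le\chi_{\nu_k}(e^{x})$, with equality only on a discrete set coming from the center of the group, which shows that $\theta=0$ is the unique maximum of $\mathrm{Re}\,\Phi_\epsilon$ on the shifted chain. The detailed tail estimates and the precise form of the $O(\epsilon)$ error have already been carried out for the single-factor case $m=1$ in~\cite{TZ},\cite{B}, and those arguments transfer essentially unchanged to our exponent $f(\tau,z)=\sum_k\tau_k\ln\chi_{\nu_k}(e^{z})$ since they depend only on the strict convexity of $f(\tau,\cdot)$ on $\hf_{\RR}$, which is preserved by positive linear combinations.
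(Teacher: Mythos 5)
Your argument is correct in outline, but it is not the route the paper itself takes: you build an exact Fourier-coefficient (contour) integral for $m_\lambda$ and evaluate it by steepest descent, which is precisely the method of \cite{TZ},\cite{B} that the paper cites for the rigorous proof. The paper's own derivation in Section \ref{mult} is instead a self-consistency (matching) argument: it postulates the ansatz $m_\lambda=e^{S(\tau,\xi)/\epsilon}\mu(\tau,\xi)(1+O(\epsilon))$ with $S$ and $\mu$ unknown, inserts it into the character identity evaluated at a fixed regular real point $e^t$, replaces the sum over $\lambda$ by an integral over $\xi$, keeps only the $w=e$ term of the Weyl character formula, and then applies Laplace's method in $\xi$; comparing exponential orders identifies $S$ as the Legendre transform of $f(\tau,\cdot)$, and comparing the Gaussian subexponential factors solves for $\mu$, yielding $\mu=\epsilon^{r/2}\sqrt{\det K}\,\Delta(x)e^{-(\rho,x)}/(2\pi)^{r/2}$. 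Your approach buys an exact starting identity and a path to an actual proof (the uniform tail bounds and cubic-remainder estimates you defer to are exactly what \cite{TZ} supplies, and strict convexity of $f$ is indeed all that is used there), while the paper's approach avoids complexification and contour shifts and produces $S$ and the prefactor simultaneously, at the price of assuming the form of the asymptotics from the outset.

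One point you should pin down is the lattice bookkeeping behind $\mathrm{vol}(T)=(2\pi)^r/|\det B|$, since the constant in the theorem hinges on it. Your normalization is correct if the integration torus is $\hf_{\RR}$ modulo $2\pi$ times the lattice dual to the \emph{root} lattice; this is legitimate because $\lambda$ and $\sum_k N_k\nu_k$ differ by a root-lattice element, so the full integrand (including $e^{-i(\lambda+\rho,\theta)}$) is periodic for that larger lattice, and then $\theta=0$ really is the unique maximum on the shifted chain. If instead $\Lambda^\vee$ is the coroot lattice (torus of the simply connected group), the volume in simple-root coordinates is $(2\pi)^r\det C/\det B$ and you must sum the $\det C$ equivalent saddles lying over the center, which you allude to but do not carry out. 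Either bookkeeping recovers $\epsilon^{r/2}\sqrt{\det K}/(2\pi)^{r/2}$ via $\det K=(\det B)^2/\det D$, but as written the choice of $\Lambda^\vee$ and the treatment of the central points are left ambiguous.
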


Note that the asymptotic is different when $\xi$ is not regular. In this paper we give a heuristic  argument why the
theorem holds. A rigorous proof, based on the application of the steepest descent method to the integral of characters
can be found in \cite{TZ} for $m=1$ and for $m>1$ it is completely parallel. 

The next theorem is the description of the statistics of irreducible components
with respect to the character measure (\ref{stat}).

Assume $\xi=\epsilon \lambda$ remain regular as $\epsilon\to 0$ and $t$ is regular.
Taking into account the asymptotic of the multiplicity and the asymptotic of the charachter $\ch_\lambda(e^t)$
in this limit, we obtain the following
asymptotic of the probability $p_\lambda^{(N)}(t)$ as $\epsilon\to 0$  is:
\begin{equation}\label{A}
p_\lambda^{(N)}(t)=\epsilon^{\frac{r}{2}}\frac{\sqrt{detK}}{(2\pi)^{\frac{r}{2}}}\frac{\Delta(x)}{\Delta(t)}e^{-(\rho,x-t)}
e^{\frac{1}{\epsilon}\widetilde{S}(\tau,\xi)}(1+O(\epsilon))
\end{equation}
where $\widetilde{S}(\tau, \xi)=S(\tau,\xi)-f(\tau,t)+(t,\xi)$.
The exponent has maximum at $\eta$ which is the Legendre image of $t$ and $\widetilde{S}(\tau, \eta)=0$.
Computing this expression in a neighborhood of the critical point $\eta$ we derive the following statement.

\begin{theorem}\label{con-2}
If $t$ is regular, the asymptotic character probability distribution is localized at point $\eta$ with a Gaussian distribution around this point. If we rescale random variable $\xi=\sum_a\xi_a\alpha_a$ near the critical point $\eta$ as $\xi_l=\eta_l+\sqrt{\epsilon}a_l$, then in the limit $\epsilon\to 0$ the random variable $a\in \hf^*_\RR\simeq \RR^r$\footnote{Here we use the basis of simple roots.} is distributed with the probability measure $p(a)da_1\dots da_n$ where $da$ is the Euclidean measure on $\RR^r$ and
\begin{equation}\label{B}
p(a,t)=\frac{\sqrt{detK}}{(2\pi)^{\frac{r}{2}}}e^{-\frac{1}{2}\sum_{bc}a_bK_{bc}a_c} .
\end{equation}
In other words, the probability measure $\{p_\lambda\}$ weakly converges to the probability measure $p(a)da$ 
\end{theorem}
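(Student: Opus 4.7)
My plan is to take formula (A) as the starting point and analyze its behavior as $\epsilon \to 0$ near the critical point $\eta$ of the exponent $\widetilde{S}(\tau,\xi)$. The key facts I would verify first are the two properties claimed just before the theorem: that $\widetilde{S}(\tau,\eta)=0$ and that $\eta$ is a critical (in fact maximum) point. The first follows by substituting $\xi=\eta$ into $\widetilde{S}=S-f(\tau,t)+(t,\xi)$ and using that at the Legendre pairing $(t,\eta)$ we have $S(\tau,\eta)=f(\tau,t)-(t,\eta)$. The second follows because $\pa S/\pa \xi_a = -x_a$ by the standard Legendre transform identity, so $\pa \widetilde{S}/\pa \xi_a = -x_a + t_a$, which vanishes exactly when $x=t$, i.e.\ when $\xi=\eta$. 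The Hessian at $\eta$ is $\pa^2 \widetilde{S}/\pa \xi_a \pa \xi_c = -K_{ac}$, and $K$ is positive definite because $D$ is the Hessian of the strictly convex function $f(\tau,\cdot)$ so $D^{-1}$ and hence $K=B D^{-1} B$ are positive definite.

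Next I would Taylor expand $\widetilde{S}$ around $\eta$, keeping only the leading quadratic term:
\[
\widetilde{S}(\tau,\xi) = -\tfrac{1}{2}\sum_{b,c}(\xi-\eta)_b K_{bc}(\xi-\eta)_c + O(|\xi-\eta|^3).
\]
Under the rescaling $\xi_l=\eta_l+\sqrt{\epsilon}\,a_l$, we get $\frac{1}{\epsilon}\widetilde{S}(\tau,\xi)=-\frac{1}{2}\sum_{bc}a_bK_{bc}a_c + O(\sqrt{\epsilon})$, with the error uniform on compact sets in $a$. Meanwhile the ratio $\Delta(x)/\Delta(t)$ and the factor $e^{-(\rho,x-t)}$ are continuous functions of $\xi$ near $\eta$ (recall $x$ is the Legendre image of $\xi$, and $t$ corresponds to $\eta$), so both tend to $1$ as $\epsilon\to 0$. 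Substituting these expansions into (A) yields
\[
p_\lambda^{(N)}(t)=\epsilon^{\frac{r}{2}}\frac{\sqrt{\det K}}{(2\pi)^{r/2}}\, e^{-\frac{1}{2}\sum_{bc}a_bK_{bc}a_c}\bigl(1+O(\sqrt{\epsilon})\bigr).
\]

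The final step is to pass from this pointwise asymptotic for the discrete probability $p_\lambda^{(N)}(t)$ to weak convergence of the associated measure to $p(a,t)\,da$. The dominant weights $\lambda$ form a lattice, and in the $\xi=\epsilon\lambda$ variable the lattice spacing is $\epsilon$, while in the rescaled variable $a$ the spacing is $\sqrt{\epsilon}$, so the number of lattice points in a volume element $da_1\cdots da_r$ is $\epsilon^{-r/2}\,da$. Multiplying this density of states by the pointwise probability cancels the $\epsilon^{r/2}$ prefactor and produces exactly the Gaussian density $p(a,t)$ of the theorem. Weak convergence of probability measures then follows, for instance by checking that the integral of any bounded continuous test function $\varphi(a)$ against the discrete measure converges to $\int \varphi(a)p(a,t)\,da$, which reduces to a Riemann-sum approximation of a Gaussian integral.

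The main obstacle, in my view, is not the local Gaussian expansion itself but controlling the tails of the measure uniformly in $\epsilon$, so that weak convergence actually holds and not merely pointwise convergence of the rescaled density. One needs a global upper bound of the form $\frac{1}{\epsilon}\widetilde{S}(\tau,\xi)\leq -c\,|\xi-\eta|^2/\epsilon$ for $\xi$ away from $\eta$ (or at least enough concentration to rule out escape of mass), which in turn depends on strict concavity of $\widetilde{S}$ on the dominant chamber and on a uniform version of the multiplicity asymptotic in Theorem~\ref{con-1} that remains valid as $\xi$ approaches the walls. The regularity assumption on $t$ is used here to keep $\eta$ in the interior of the dominant chamber so that the Gaussian fluctuations do not feel the boundary.
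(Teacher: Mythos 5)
Your proposal follows essentially the same route as the paper's proof outline: identify $\eta$ as the critical point of $\widetilde{S}$ via the Legendre transform identities, Taylor expand $\widetilde{S}$ to second order, rescale $\xi=\eta+\sqrt{\epsilon}\,a$, and observe that the subexponential prefactor collapses to $\sqrt{\det K}/(2\pi)^{r/2}$ as $x\to t$. Two useful points you add beyond the paper's terse outline: you correctly obtain the Hessian of $\widetilde{S}$ at $\eta$ as $-K$ (the paper's Section 4.1 writes $D_{ab}$ there, which appears to be a typo --- $K$ is what the normalization $\sqrt{\det K}/(2\pi)^{r/2}$ in (\ref{B}) requires, and $\det K\ne\det D$ in general), and you spell out the lattice density-of-states count ($\epsilon^{-r/2}$ per unit $a$-volume) that converts the pointwise asymptotic into the weak limit (\ref{cc}), which the paper leaves implicit. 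Your closing remark about needing a uniform upper bound on $\widetilde{S}/\epsilon$ away from $\eta$ to control mass escape is a legitimate gap in the paper's heuristic argument; the authors themselves defer rigor to \cite{TZ},\cite{B}.
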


Let $\Lambda$ be the weight lattice of the Lie algebra $\gf$ and $\epsilon_n$ be a positive sequence such that $\epsilon_n\to 0$ when $n\to \infty$. Define $\Lambda_{\epsilon_n}\subset \RR^r$  as the image of $\Lambda$ with respect to the embedding $\lambda=\sum_i\lambda_i\omega_i\mapsto\sum_i\epsilon_n\lambda_i\omega_i\in\mathbb{R}^r$.  Define the subset $D\subset \Lambda$ as the set of weights that appear in the decomposition of the tensor product (\ref{tp}) and the subset $D_{\epsilon_n}
\subset \Lambda_{\epsilon_n}\subset \RR^r$. Note that elements of $D$ have the form $\sum_kN_k\nu_k-\sum_{b=1}^ra_b\alpha_b$ where $a_b\geq 0$ are integers and $\alpha_a$ are simple roots. The weak convergence of measures in the context of theorem \ref{con-2} means the following. For any  bounded continuous function
$h$ on $\RR^r$, a sequence $\epsilon_n\to 0$, and the condition $\tau_k=\epsilon_nN_k$ are fixed for all $k$ in the 
tensor product 
\begin{equation}\label{cc}
\sum_{\lambda\in D} h(\sqrt{\epsilon_n}(\lambda-\frac{\eta}{\epsilon_n}))p^{(N)}_\lambda(t)\to \int_{\RR^r}h(a)p(a,t)da_1\dots da_r ,
\end{equation}

The extreme non regular case is when $t=0$. In this case the probability distribution 
is given by 
\begin{equation}
  \label{Plan}
   p_\lambda^{(N)}(0) =  \frac{m_{\lambda}(\{V_k\}, \{N_k\})dim(V_\lambda)}{\prod_k dim(V_{\nu_k})^{N_k}}.
\end{equation}

\begin{theorem}\label{con-3}
As $\epsilon\to 0$ the Plancherel measure (\ref{Plan}) weakly converges to the probability measure $p(a)da$ on $\hf^*_{\geq 0}\simeq \RR^r$ where $da$ is the Euclidean measure on $\RR^r$ and the density function is
\begin{equation}\label{Pl}
p(a)=\frac{\sqrt{det B}}{(2\pi)^{\frac{r}{2}}}\prod_{\alpha>0}\frac{(a,\alpha)^2}{(\rho,\alpha)}
e^{-\frac{1}{2}\sum_{b,c} a_bB_{bc}a_c} ,
\end{equation}
where random variables $\lambda$ and $a$ are related as $\lambda=\sqrt{\frac{1}{x\epsilon}}a$ and $x=\frac{1}{dim(\gf)}\sum_{k=1}^m \tau_k c_2(\nu_k)$. Here $c_2(\nu)$ is the value of the Casimir element on the irreducible
representation $V_{\nu}$.
\end{theorem}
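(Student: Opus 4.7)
The plan is to derive Theorem \ref{con-3} from Theorem \ref{con-1} by analysing the degenerate regime $\xi\to 0$, in which $t=0$ is maximally non-regular. To avoid clashing notation let $\kappa=\frac{1}{\dim\gf}\sum_k\tau_k c_2(\nu_k)$ denote the Casimir constant of the statement (denoted $x$ there) and reserve $x$ for the Legendre image of $\xi$ as in Theorem \ref{con-1}. The central observation is that for any finite-dimensional representation of a simple Lie algebra the trace of $\rho(X)$ on $\gf$ vanishes, so $\nabla_y f(\tau,y)|_{y=0}=0$; hence $y=0$ is the unique minimum of $f(\tau,\cdot)$, the Legendre image $x$ of $\xi$ tends to $0$ as $\xi\to 0$, and $S(\tau,\xi)/\epsilon$ is maximised at $\xi=0$. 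Consequently the Plancherel measure concentrates at the origin, and nontrivial fluctuations occur on the scale $\lambda\sim 1/\sqrt{\epsilon}$ set by $\kappa$.

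First I would expand the character data around $y=0$. From the Casimir identity $\mathrm{tr}_V(\rho(X)\rho(Y))=\frac{c_2(V)\dim V}{\dim\gf}(X,Y)$ one gets $\log(\chi_\nu(e^y)/\dim V_\nu)=\frac{c_2(\nu)}{2\dim\gf}(y,y)+O(y^3)$, so that
\[
f(\tau,y)=\sum_k\tau_k\log\dim V_{\nu_k}+\tfrac{\kappa}{2}\,y^{T}By+O(y^3).
\]
This gives $D=\kappa B+O(y)$, the leading-order Legendre image $x\approx\xi/\kappa$, the matrix $K=BD^{-1}B\to B/\kappa$, and the Legendre transform $S(\tau,\xi)=\sum_k\tau_k\log\dim V_{\nu_k}-\tfrac{1}{2\kappa}\xi^{T}B\xi+O(\xi^3)$. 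Under the rescaling of the theorem, the constant part of $e^{S(\tau,\xi)/\epsilon}$ is cancelled exactly by the denominator $\prod_k(\dim V_{\nu_k})^{N_k}$ in (\ref{Plan}), and the quadratic Legendre remainder produces (after absorbing $\kappa$ into the rescaling) the Gaussian factor $e^{-\frac12 a^{T}Ba}$ of (\ref{Pl}).

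Next I would multiply Theorem \ref{con-1}'s formula for $m_\lambda$ by the Weyl dimension $\dim V_\lambda=\prod_{\alpha>0}(\lambda+\rho,\alpha)/(\rho,\alpha)\sim\prod_{\alpha>0}(\lambda,\alpha)/(\rho,\alpha)$. Since $x\to 0$ the Weyl denominator expands as $\Delta(x)\sim\prod_{\alpha>0}(x,\alpha)$, and the Vandermonde factor from $\Delta(x)$ combined with the polynomial factor from $\dim V_\lambda$ yields the doubled product $\prod_{\alpha>0}(a,\alpha)^2/\prod_{\alpha>0}(\rho,\alpha)$. The factor $e^{-(\rho,x)}\to 1$ since $x\to 0$, and careful bookkeeping of the powers of $\epsilon$ (from the prefactor $\epsilon^{r/2}$ of Theorem \ref{con-1}, from $\Delta(x)\sim\epsilon^{|\Delta_+|/2}$, from $\dim V_\lambda\sim\epsilon^{-|\Delta_+|/2}$, and from the lattice volume element in passing from $\lambda$ to $a$) shows that all powers of $\epsilon$ cancel and one recovers the density (\ref{Pl}), with the determinantal prefactor $\sqrt{\det K}=\sqrt{\det B}/\kappa^{r/2}$ absorbed in the rescaling.

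The main obstacle is that Theorem \ref{con-1} was established for $\xi$ strictly in the interior of the Weyl chamber with $\xi$ fixed as $\epsilon\to 0$; here it is being applied in the marginal regime $\xi=O(\sqrt{\epsilon})$, so what is actually required is a uniform version of the asymptotic down to the origin. I would supply an independent confirmation via the generating-function identity
\[
\prod_k\Bigl(\frac{\chi_{\nu_k}(e^t)}{\dim V_{\nu_k}}\Bigr)^{N_k}=\sum_\lambda p_\lambda^{(N)}(0)\,\frac{\chi_\lambda(e^t)}{\dim V_\lambda},
\]
evaluated at $t=s\sqrt{\epsilon}$. By the Casimir expansion the left side tends to $e^{\kappa(s,s)/2}$, while the Weyl character and dimension formulas give a pointwise limit of $\chi_\lambda(e^{s\sqrt\epsilon})/\dim V_\lambda$ that, after dominated convergence, matches $\int p(a)F(a,s)\,da$ with $F$ an explicit antisymmetrised Harish-Chandra-type function. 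Identifying the two sides via Fourier/Laplace inversion recovers (\ref{Pl}); the tightness needed to justify this passage follows from the Gaussian tail coming from Theorem \ref{con-1}.
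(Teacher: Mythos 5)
Your proposal is correct and follows essentially the same route as the paper: both derive Theorem~\ref{con-3} from Theorem~\ref{con-1} by Taylor-expanding the rate function around the maximum $\xi=0$, using the Casimir identity $\mathrm{tr}_V(H_aH_b)=\frac{c_2(V)\dim V}{\dim\gf}B_{ab}$ (Lemma~\ref{L1} in the paper) to get $D\to\kappa B$, $K\to B/\kappa$, $x\to\xi/\kappa$, $S(\tau,\xi)-S(\tau,0)\to-\frac{1}{2\kappa}\xi^{T}B\xi$, then multiplying by $\dim V_\lambda$ and $\Delta(x)\sim\prod_{\alpha>0}(x,\alpha)$ to produce the $\prod(a,\alpha)^2$ factor, and matching powers of $\epsilon$. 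You correctly isolate the genuine gap --- Theorem~\ref{con-1} is stated for fixed regular $\xi$, whereas here one needs uniformity of the asymptotic down to the boundary stratum $\xi=0$; the paper handles this by simply \emph{assuming} uniformity in a footnote and citing \cite{B}, \cite{TZ}, while you propose to close it instead with the generating-function identity $\prod_k(\chi_{\nu_k}(e^{s\sqrt\epsilon})/\dim V_{\nu_k})^{N_k}=\sum_\lambda p_\lambda^{(N)}(0)\chi_\lambda(e^{s\sqrt\epsilon})/\dim V_\lambda$ and Laplace inversion. That extra verification is a nice addition not present in the paper, but it is a sanity check on top of the same basic derivation rather than an alternative proof; to be complete it would still require justifying dominated convergence and the tightness you mention, which in substance is the same uniformity issue.
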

This result generalizes slightly convergence results for $m=1$ from \cite{B}\cite{TZ}.

Now consider again the character measure on irreducible components of the tensor product
but assume that $t=\sqrt{\frac{\epsilon}{x}} u$ as $\epsilon\to 0$. In this case $p_\lambda^{(N)}(t)$
converges to a natural deformation of the measure (\ref{Pl}).

\begin{theorem} \label{con-4}As $\epsilon\to 0$, $t=\sqrt{\frac{\epsilon}{x}} u$ and $u$ is fixed, 
the character measure (\ref{stat}) weakly converges to the measure $p(b,u)db_1\dots db_r$ on $\RR^r_{\geq 0}$
with density
\begin{equation}\label{Plu}
p(b,u)=\frac{\sqrt{\det 
B}}{(2\pi)^{\frac{r}{2}}}
\frac{\prod_{\alpha\in \Delta_+}(b,\alpha)}{\prod_{\alpha\in \Delta_+}
(u,\alpha)}
\sum_{w\in W}(-1)^we^{(b,w(u))}e^{-\frac{(b,b)}{2}-\frac{(u,u)}{2}}
\end{equation}
Here random variables $\lambda$ and $b$ as related as $\lambda=\sqrt{\frac{1}{x\epsilon}}b$.
\end{theorem}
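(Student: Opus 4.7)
The plan is to establish the stated weak convergence by combining the multiplicity asymptotic of Theorem~\ref{con-1} with the exact Weyl character formula for $\chi_\lambda(e^t)$, in the spirit of the Plancherel argument of Theorem~\ref{con-3}. The new feature here is the small but nonzero $t=O(\sqrt{\epsilon})$, which makes the Weyl denominator of $\chi_\lambda(e^t)$ vanish at the same rate as the factor $\Delta$ in Theorem~\ref{con-1}; this is exactly the mechanism that produces the polynomial ratio $\prod_{\alpha>0}(b,\alpha)/\prod_{\alpha>0}(u,\alpha)$ and the full finite character sum in~(\ref{Plu}).

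The first step is to expand the multiplicity from Theorem~\ref{con-1}, just as in Theorem~\ref{con-3}. Weyl invariance of the multiset of weights of each $V_{\nu_k}$ forces the Hessian to take the form $D(\tau,0)=xB$, with $x=(1/\dim\gf)\sum_k\tau_k c_2(\nu_k)$ the scalar from the statement; hence the Legendre image $y$ of $\xi=\epsilon\lambda$ satisfies $y=\xi/x+O(\xi^2)$ and $K(\tau,0)=B/x$. Under the stated scaling one obtains $\sqrt{\det K}=x^{-r/2}\sqrt{\det B}$, a factor $\Delta(y)$ that is a fractional power of $\epsilon$ times $\prod_{\alpha>0}(b,\alpha)$, $e^{-(\rho,y)}\to 1$, and $\exp(S(\tau,\xi)/\epsilon)=\prod_k(\dim V_{\nu_k})^{N_k}\exp(-(b,b)/2)(1+o(1))$, the last coming from the quadratic expansion of $S$ about $\xi=0$.

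The second step is to evaluate the character itself. Since $(\lambda,t)=O(1)$, the numerator of the Weyl character formula $\sum_{w\in W}(-1)^w e^{(w(\lambda+\rho),t)}$ must be kept exact; because $(\rho,t)\to 0$ it converges to $\sum_w(-1)^w e^{(wb,u)}$. The Weyl denominator $\prod_{\alpha>0}2\sinh((\alpha,t)/2)$ Taylor-expands to $\prod_{\alpha>0}(\alpha,t)$, whose leading $\epsilon$-power precisely matches that of $\Delta(y)$ from Step~1, leaving $\prod_{\alpha>0}(u,\alpha)$ in the denominator of the limit. For the normalization $\prod_k\chi_{\nu_k}(e^t)^{N_k}$, Taylor expansion of $\log\chi_{\nu_k}(e^t)$ through second order, together with $D(\tau,0)=xB$, produces $\prod_k(\dim V_{\nu_k})^{N_k}\exp((u,u)/2)(1+o(1))$.

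Substituting into $p^{(N)}_\lambda(t)=m_\lambda\chi_\lambda(e^t)/\prod_k\chi_{\nu_k}(e^t)^{N_k}$, the factors $\prod_k\dim V_{\nu_k}^{N_k}$ and the matching $\epsilon$-powers cancel, and the Jacobian of the change of variables $\lambda\mapsto b$ absorbs the remaining prefactor to produce precisely (\ref{Plu}); weak convergence then follows from the usual Riemann-sum argument, with the Gaussian tail $e^{-(b,b)/2}$ supplying uniform control for bounded continuous test functions. The main technical obstacle is that Theorem~\ref{con-1} was proved in \cite{TZ} only for regular $\xi$, whereas here $\xi\to 0$ approaches the wall of the Weyl chamber; this requires extending the steepest-descent analysis to a neighborhood of the wall, where the compensating vanishings of $\Delta(y)$ and of the Weyl denominator of $\chi_\lambda(e^t)$ conspire to produce the nontrivial polynomial ratio and the exact character sum in the limit density.
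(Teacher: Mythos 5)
Your proposal follows essentially the same route as the paper's proof in Section~\ref{Int}: substitute the asymptotics from Theorem~\ref{con-1} near $\xi=0$ (using $D(\tau,0)=xB$, hence $K=B/x$), keep the full Weyl numerator in $\chi_\lambda(e^t)$ because $(\lambda,t)=O(1)$, and pair the $\epsilon^{|\Delta_+|/2}$ vanishing of $\Delta(y)$ against that of the Weyl denominator $\Delta(t)$ to obtain the ratio $\prod_{\alpha>0}(b,\alpha)/\prod_{\alpha>0}(u,\alpha)$, with the normalization $\prod_k\chi_{\nu_k}(e^t)^{N_k}$ Taylor-expanded through second order. Your caveat about extending Theorem~\ref{con-1} uniformly down to the wall $\xi\to 0$ is the same uniformity assumption the paper makes explicitly in its treatment of Theorem~\ref{con-3} (referring to \cite{B}, \cite{TZ} for the rigorous version), so the argument matches in both content and level of rigor.
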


\subsection{} The paper is organized as follows. In section \ref{mult} we derive the asymptotic of the multiplicity function from the character identities. In section \ref{P} we derive the asymptotic distribution with respect to the character measure (\ref{stat}), assuming that $t$ is regular.
We also show there that the trivial representation is the most probable with respect to the Plancherel measure and derive the 
asymptotic statistics around it. Section \ref{ex-hook} contains an outline of the derivation of the asymptotic of the multiplicity 
function using the hook formula for $N$-th tensor power of the vector representation of $sl_{n+1}$ and 
the corresponding asymptotic distribution. In section \ref{ex-leg} we compute the same asymptotic from theorem \ref{con-1}. We derive intermediate asymptotic and outline the proof of Theorem \ref{con-3} in section \ref{Int}.
Nonlinear PDE's for the rate function are derived in section \ref{PDE}.
In the conclusion we outline some perspectives.

\subsection{Acknowledgments } We thank A. Nazarov and V. Serganova for stimulating discussions. We are 
grateful  for E. Feigin, V. Gorin, M. Walter and S. Zelditch for important remarks and for 
pointing us to recent works on the subject. 
This work was supported by RSF-18-11-00-297. The work
of NR was partly supported by the grant NSF DMS-1601947.

\section{ Plancherel and character measures as density matrices}

Let $W$ be a finite dimensional representation of a simple Lie algebra $\gf$. Let 
\[
W\simeq\oplus V_i^{\oplus m_i}
\]
be the decomposition of $W$ into irreducibles where $m_i$ is the multiplicity of $V_i$.
By analogy with the usual Plancherel measure for left regular representations we will say that 
{\it the Plancherel measure on} $W$ is the probability measure on the set of irreducible components
of $W$ with the probability of $V_i$ being
\[
p_i=\frac{dim(V_i)m_i}{dim(W)} .
\]

This probability measure can be regarded as a quantum probabilistic state on $End(W)$
which assigns to an operator $A:W\to W$  its expectation value $tr(A)/dim(W)$.
In other words, this is the state with the density matrix
\[
\rho=\frac{I}{dim(W)} ,
\]
where $I:W\to W$ is the identity operator acting in $W$.
This density matrix can be interpreted as the quantum equilibrium  state 
\[
\rho=\frac{e^{-\frac{H}{T}}}{tr(e^{-\frac{H}{T}})}
\]
with zero Hamiltonian,  $H=0$, i.e. the topological quantum mechanics 
with the space of states $W$.

The character measure (\ref{stat}) can be interpreted in a similar way.
It corresponds to the Hamiltonian being semisimple element of $\gf$, i.e.  
by a conjugation such an $H$ can be brought to an element of the Cartan subalgebra.
In notations used in the introduction $t=-H/T$.
         
\section{Derivation of the asymptotic of multiplicities}\label{mult}
 
In this section we give arguments justifying conjecture \ref{con-1} for the asymptotic of the multiplicity function for tensor product of representations and show how to obtain exponential and sub exponential terms of this asymptotic from their characters.
From now on we assume that $\lambda$ is regular.

\subsection{Large deviation rate function $S(\tau,\xi)$}
\label{11}
We expect\footnote{This is proven in \cite{TZ} for $m=1$. The proof is based on the application of the steepest descent method 
and on the orthogonality of characters of irreducible representations of corresponding compact  Lie group. For $m>1$ the proof is
completely similar. } that in the limit  $N=\tau/\epsilon$ and $\lambda=\xi/\epsilon,\;\;\epsilon\to 0$  with $\nu_k$ and $x$ fixed, the  multiplicity function $m_\lambda^N$ has the following asymptotic:
\begin{equation}
\label{mas}
    m_\lambda^N=e^{\frac{1}{\epsilon}S(\tau,\xi)}\mu(\tau,\xi)(1+O(\epsilon)).
\end{equation}
Here $S(\tau,\xi)$ is the large deviation rate function and $\mu(\tau,\xi)$ is a function of $\tau,\xi$ which is 
proportional to a power of $\epsilon$. We will now derive explicit expressions for these functions via character identities. Note that here $\lambda$ is in the main Weyl chamber and therefore $\xi\in \hf^*_{\geq 0}$
where $\geq 0$ means $\xi=\sum_{a=1}^r\xi_a\alpha_a$ has nonnegative coordinates $\xi_a$.

From the tensor product decomposition we have:
\begin{equation}\label{chid}
        \prod_k \chi_{\nu_k}(e^t)^{N_k}=\sum_{\lambda\in D} m_{\lambda}(\{V_k\}, \{N_k\})\chi_\lambda(e^t) .
    \end{equation}
where $\chi_{\lambda}(e^t)=tr_{V_\lambda}(e^t)$ is the character of module $V_\lambda$
evaluated at $e^t\in G$ and $D\subset \Lambda$ is as in (\ref{cc}).

Now let us substitute  (\ref{mas}) into (\ref{chid}) and replace the summation over $\lambda$
by an integral over $\xi$:
\begin{equation}
\label{sumint}
e^{\frac{f(\tau,t)}{\epsilon}}=\sum_{\lambda} m_{\lambda}^{N}\chi_{\lambda}(e^t)\simeq
\epsilon^{-r}\int_{D_0}
e^{\frac{1}{\epsilon}S(\tau,\xi)}\mu(\tau,\xi)
\chi_{\frac{\xi}{\epsilon}}(e^t)d
\xi,
\end{equation}
Here $r$  is the rank of Lie algebra $\gf$, $D_0\subset \RR^r$ is the limiting set of $D_\epsilon$ as $\epsilon\to 0$.
$D_0$ is a convex polytope  in the positive chamber $\RR^r_{\geq 0}$ with $\sum_k\tau_k\nu_k$ being the extreme point 
which is farthest from the origin. The measure $d\xi$ is the Euclidean measure $d\xi_1\cdots d\xi_r$.

Let us use the Weyl character formula 
\[
\chi_{\lambda}(e^t)=\frac{\sum_we^{(w(\lambda+\rho),t)}(-1)^{l(w)}}{\Delta(t)} ,   
\]
where $\rho=\frac{1}{2}\sum_{\alpha\in\Delta^+}\alpha,$ and $l(w)$ is the length of $w$ (in terms of simple reflections). When $t$ is regular and is in the positive Weyl chamber and  $\lambda=\xi/\epsilon$ where $\xi$ is fixed and is also in the positive Weyl chamber, and $\epsilon\to 0$, the term with $w=e$ gives the leading asymptotic and we have:
\begin{equation}
\label{smallchi}
    \chi_{\frac{\xi}{\epsilon}}(e^t)=e^{\frac{1}{\epsilon}(\xi,t)}\frac{e^{(\rho,t)}}{\Delta(t)}(1+o(1)).
\end{equation}
From now on we will identify $\hf^*_{\geq 0}$ with $\RR^r_{\geq 0}$ using the basis of simple roots.

Substituting (\ref{smallchi}) into (\ref{sumint}) we obtain:
\begin{equation}
\label{int2}
   e^{\frac{f(\tau,t)}{\epsilon}}\simeq\epsilon^{-r} \int_{D_0}
e^{\frac{1}{\epsilon}S(\tau,\xi)}\mu(\tau,\xi)
\chi_{\frac{\xi}{\epsilon}}(x)d\xi\simeq
\epsilon^{-r}\int_{\RR^r_{\geq 0}}
e^{\frac{1}{\epsilon}(S(\tau,\xi)+(\xi,t))}\mu(\tau,\xi)
\frac{e^{(\rho,t)}}{\Delta(t)}d\xi.
\end{equation}
where $d\xi=d\xi_1\dots d\xi_r$.

Assume that $S(\tau, \xi)$ is strictly concave in $\xi$. This implies that 
$S(\tau,\xi)+(\xi,t)$  has unique maximum at some point $\eta$. Then the leading 
contribution to the integral comes from the vicinity of $\eta$. It is given by the Gaussian integral over
$a=(\xi- \eta)/\sqrt{\epsilon}$ and determined by the second Taylor coefficient of $S(\tau,\xi)$:
\begin{equation}
\label{long}
e^{\frac{f(\tau,t)}{\epsilon}}=e^{\frac{1}{\epsilon}(S(\tau,\eta)+(t,\eta))}\mu(\tau,\eta)\epsilon^{-r}\frac{e^{(\rho,t)}}{\Delta(t)}
\epsilon^{\frac{r}{2}}
\int_{\mathbb{R}^r}e^{\frac{1}{2}(a,S^{(2)}a)}d^ra
\end{equation}
where $S^{(2)}_{ab}=
\frac{\partial^2}{\partial \xi_a \partial \xi_b}S(\tau,\xi)|_{\eta} $.

Comparing most singular terms in this equation we can identify $S(\tau,\xi)$  with the Legendre transform of $ f(\tau,y)$ in $y$: 
\begin{equation}
\label{s}
    S(\tau,\xi)= \min_yf(\tau, y)-(y,\xi))=f(\tau,x)-(x,\xi),
\end{equation}
Because the characters are strictly convex, the function $f(\tau, x)$ is strictly convex in $x$. This implies
that $S(\tau, \xi)$ is strictly concave in $x$, which agrees with the assumption made earlier. 
Note that for the inverse Legendre transform we have
\begin{equation}
\label{leg}
    f(\tau,x)=\max_\xi(S(\tau,\xi)+(x,\xi))=S(\tau,\eta)+(x,\eta),
\end{equation}
where $x$ and $\eta$ are related as 
\begin{equation}
\label{max2}
    \tau\frac{\partial}{\partial x_a}\ln \chi_\omega(x)=\sum_b B_{ab}\eta_b,
\end{equation}
\begin{equation}
\label{max1}
    \frac{\partial S(\tau,\xi)}{\partial \xi_a}|_{\xi=\eta}=-\sum_bB_{ab}x_b.
\end{equation}
Here $B_{ab}$ is the symmetrized Cartan matrix and $(\eta,\xi)=\sum_a \eta_a B_{ab}\xi_b$.

Since $f(\tau,x))$ is a convex function of $x$ with minimum at $x=0$, the function $S(\tau,\xi)$  defined in (\ref{s}) is a concave function of $\xi$  with the unique maximum at $\xi=0$.

\subsection{Subexponential terms}\label{12}
Now after deriving most singular exponential factors, the equation (\ref{long}) is reduced to
\begin{equation}
\label{long2}
1=\mu(\tau,\eta)\frac{e^{(\rho,t)}}{\Delta(t)}\epsilon^{-\frac{r}{2}}\int_{\mathbb{R}^r}e^{\frac{1}{2}(a,S^{(2)}a)}d^ra.\end{equation}
Taking into account that the Gaussian integral can be computed as
\[
    \int_{\mathbb{R}^r}e^{-\frac{1}{2}(a,Ka)}d^ra=
    (2\pi)^{r/2}\frac{1}{\sqrt{\det K}} .
\]
the equation(\ref{long2}) gives:
\[
\mu(\tau,\eta)\frac{e^{(\rho,x)}}{\Delta(x)}\epsilon^{-\frac{r}{2}}(2\pi)^{r/2}\frac{1}{\sqrt{\det K}}=1 .
\]
where $K_{ab}=-\frac{\partial^2S}{\partial \xi_a \partial \xi_b}(\xi)|_{\xi=\eta}$ and $\xi_0$ is the Legendre image of $x$.

Therefore, we derived the unknown function $\mu(\tau,\xi)$ as
\[
 \mu(\tau,\eta)= \epsilon^{r/2}  \frac{\sqrt{\det K}}{(2\pi)^{r/2}}\Delta(x)
 e^{-(\rho,x)} .
\]
where $\eta$ and $x$ are related as in (\ref{max2}).

To calculate  partial $K$ explicitly we will use (\ref{max1}) and (\ref{max2}). Differentiating (\ref{max1}) with respect to $x_d$ we get
\[
   \frac{\partial^2S}{\partial \xi_a \partial \xi_c}(\xi)|_{\xi=\eta} \frac{\partial(\eta)_c}{\partial x_d}=-B_{ad} ,
\]
then
\[
   \frac{\partial^2S}{\partial \xi_a \partial \xi_c}(\xi)|_{\xi=\eta}=-B_{ad}\frac{\partial x_d}{\partial \eta_c}.
\]
To obtain $\frac{\partial x_d}{\partial \eta_c}$ we will differentiate (\ref{max2}) with respect to $\eta_c$: 
\[
     \frac{\partial^2}{\partial x_a \partial x_b}f(\tau,x)\frac{\partial x_b}{\partial \eta_c}=B_{ac} .
\]
Here $x$ is regarded as the Legendre image of $\eta$.
Denote
\begin{equation}
\label{b}
    D_{ab}= \frac{\partial^2}{\partial x_a \partial x_b}f(\tau,x),
\end{equation}
then
\[
D_{ab}\frac{\partial x_b}{\partial \xi_c}=B_{ac} ,
\]
and therefore
\[
    \frac{\partial x_b}{\partial \xi_c}=(D^{-1})_{ba}B_{ac} .
\]
Finally, we derive the formula for $K$ in terms of $f$:
\[
   K_{ab}=B_{ac}(D^{-1})_{cd}B_{db} .
\]

\section{Probability distributions}\label{P}
Here we will derive probability distributions for two cases, for the character measure and
for the Plancherel measure, i.e. we will outline proofs of Theorems \ref{con-2} and \ref{con-3}.

\subsection{Character distribution.}
Here we outline the proof of Theorem \ref{con-2}.
The function
\[
\widetilde{S}(\tau,\xi)=S(\tau,\xi)-f(\tau,t)+(t,\xi)
\]
in the formula (\ref{A}) is strictly concave in $\xi$ and therefore it has unique 
maximum at some point $\eta$, which is the Legendre image of $t$ with respect to the function $S(\tau,\xi)$.  Because $S(\tau,\xi)$ is invariant with respect to the action of the 
Weyl group, the point $\eta$ is regular if $t$ is regular. In the vicinity of $\eta$ we have
\[
\widetilde{S}(\tau,\xi)=-\frac{\epsilon}{2} \sum_{a,b} a_aD_{ab}a_b+O(\epsilon^{3/2}),
\]
where $a_a=(\xi_a-\eta_a)/\sqrt{\epsilon}$ are the coordinates in the basis of simple roots. Taking this into account and passing to the limit $\epsilon\to 0$
in (\ref{A}) we obtain the Gaussian distribution (\ref{B}).

\subsection{Plancherel distribution}\label{planch}
Here we outline the proof of Theorem \ref{con-3} under the assumption that the asymptotic of multiplicities 
is uniform in $\xi$ in $D_0$ including boundary strata\footnote{For a rigorous proof see \cite{B} and \cite{TZ}}. 
In other words the asymptotic along the 
boundary can be obtained by taking corresponding limit in $\xi$. 

For the Plancherel probability distribution we have the following asymptotic when $\epsilon\to 0$ and $\xi=\epsilon \lambda$ is finite  and regular:
\begin{equation}\label{Plan-prob-1}
p_\lambda^{(N)}(t)=\epsilon^{\frac{r}{2}+|\Delta_+|}\frac{\sqrt{detK}\prod_{\alpha>0}(\xi,\alpha)}{(2\pi)^{\frac{r}{2}}\prod_{\alpha>0}(\rho,\alpha)}\Delta(x)e^{-(\rho,x)}
e^{\frac{1}{\epsilon}S(\tau,\xi)}(1+O(\epsilon)) .
\end{equation}
We used the Weyl formula for the dimension of $V_\lambda$:
\[
dim(V_\lambda)=\prod_{\alpha\in \Delta_+}\frac{ (\lambda+\rho,\alpha)}{(\rho, \alpha)}
\]
The function $S(\tau,\xi)$ attains its maximum at $\xi=0$. So, we shall find the asymptotic of
the formula (\ref{Plan-prob-1}) near this point. The convexity of the function $f$ implies that for 
small $\xi$, its Legendre dual $x$ is also small, therefore for small $\xi$
\[
    \sum_b\frac{\partial^2 f(\tau,0)}{\partial x_a\partial x_b}x_b=
    \sum_b B_{ab}\xi_b +O(\xi^2).
\]
Now let us compute the matrix of second derivative
\begin{equation}
\label{l}
\frac{\partial^2 f(\tau,0)}{\partial x_a\partial x_b}=
\frac{\partial^2}{\partial x_a\partial x_b}
\sum_k\tau_k\ln(tr_{V_{\nu_k}}(e^{\sum_ax_aH_a}))|_{x=0} .
\end{equation}
Here $\{H_a\}$ is a basis of simple roots in the Cartan subalgebra.

\begin{lemma}\label{L1}
The following equality holds:
\[
\frac{\partial^2}{\partial x_a\partial x_b}(\ln tr_{V_{\nu}}(e^{x}))|_{x=0}=\frac{c_2(\nu)}{dim(\mathfrak{g})}B_{ab} ,
\]
where $B_{ab}$ is the symmetrized Cartan matrix and $c_2$ is the Casimir element (see below).
\end{lemma}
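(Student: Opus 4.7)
The plan is to expand $\ln \operatorname{tr}_{V_\nu}(e^x)$ to second order in $x = \sum_a x_a H_a$ and then use invariant theory to identify the quadratic form that appears. Writing out the power series, one has
\[
\ln \operatorname{tr}_{V_\nu}(e^x) = \ln \dim V_\nu + \frac{\operatorname{tr}_{V_\nu}(x)}{\dim V_\nu} + \frac{1}{2}\left(\frac{\operatorname{tr}_{V_\nu}(x^2)}{\dim V_\nu} - \frac{\operatorname{tr}_{V_\nu}(x)^2}{(\dim V_\nu)^2}\right) + O(x^3),
\]
so the second partial derivative at $x=0$ equals $\operatorname{tr}_{V_\nu}(H_a H_b)/\dim V_\nu - \operatorname{tr}_{V_\nu}(H_a)\operatorname{tr}_{V_\nu}(H_b)/(\dim V_\nu)^2$.

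First I would dispose of the linear term. For $\gf$ simple (more generally semisimple), the Cartan subalgebra lies in $[\gf,\gf]$, so every $H\in\hf$ is a sum of commutators and hence $\operatorname{tr}_{V_\nu}(H)=0$. Equivalently, the multiset of weights of $V_\nu$ is Weyl-invariant and sums to a $W$-invariant element of $\hf^*$, which must be zero. This kills the correction term and reduces the claim to
\[
\frac{\operatorname{tr}_{V_\nu}(H_aH_b)}{\dim V_\nu} = \frac{c_2(\nu)}{\dim \gf}\,B_{ab}.
\]

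The main step, which is essentially the whole content, is an invariant-theory argument. The bilinear form $(X,Y) \mapsto \operatorname{tr}_{V_\nu}(XY)$ on $\gf$ is symmetric and $\gf$-invariant (using cyclicity of the trace together with the fact that $V_\nu$ is a representation). Since $\gf$ is simple, the space of invariant symmetric bilinear forms is one-dimensional, so this trace form must equal $c_\nu\cdot K(\cdot,\cdot)$ for some scalar $c_\nu$, where $K$ is the Killing form (normalized so that $K(H_a, H_b) = (\alpha_a,\alpha_b) = B_{ab}$ under the standard identification $\hf \cong \hf^*$). To pin down $c_\nu$ I would pair with dual bases $\{x_i\},\{x^i\}$ of $\gf$ with respect to $K$: on the one hand $\sum_i \operatorname{tr}_{V_\nu}(x_i x^i) = \operatorname{tr}_{V_\nu}(\Omega) = c_2(\nu)\dim V_\nu$, where $\Omega$ is the quadratic Casimir; on the other hand $\sum_i c_\nu K(x_i,x^i) = c_\nu \dim \gf$. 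Hence $c_\nu = c_2(\nu)\dim V_\nu/\dim\gf$, and substituting $X=H_a$, $Y=H_b$ gives the lemma.

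The only mild subtlety is the normalization conventions: one must check that the chosen basis $\{H_a\}$ of $\hf$ corresponds, under the identification given by the Killing form (or the rescaled form used to define $B$), to the simple roots $\{\alpha_a\}$ so that $K(H_a,H_b)=B_{ab}$ rather than some rescaling thereof. This is a bookkeeping issue rather than a genuine obstacle, and once it is settled the identity follows at once.
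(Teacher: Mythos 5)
Your proposal is correct and follows essentially the same route as the paper: compute the Hessian of $\ln \operatorname{tr}_{V_\nu}(e^x)$ at $x=0$ to reduce the claim to $\operatorname{tr}_{V_\nu}(H_aH_b)/\dim V_\nu$, then identify the trace form with a multiple of the Killing form by uniqueness of invariant symmetric bilinear forms on a simple Lie algebra (the paper phrases this as Schur's lemma) and fix the constant by contracting with dual bases to produce the Casimir. The only difference is that you explicitly justify the vanishing of the linear terms $\operatorname{tr}_{V_\nu}(H_a)$, a step the paper drops silently, which is a welcome clarification rather than a deviation.
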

\begin{proof}
\begin{equation*}
\frac{\partial^2}{\partial x_a\partial x_b}(\ln tr_{V_{\nu}}(e^{x}))|_{x=0}=\frac{\partial}{\partial x_a}\frac{\frac{\partial}{\partial x_b} tr_{V_{\nu}}(e^{x})}{tr_{V_{\nu}}(e^{x})}|_{x=0}=
\end{equation*}
\begin{equation*}
=\frac{\frac{\partial^2}{\partial x_a\partial x_b} 
\left(tr_{V_{\nu}}(e^{x})\right)tr_{V_{\nu}}(e^{x})-\frac{\partial}{\partial x_a}\left( tr_{V_{\nu}}(e^{x})\right)\frac{\partial}{\partial x_b}\left( tr_{V_{\nu}}(e^{x})\right)}{\left(tr_{V_{\nu}}(e^{x})\right)^2}|_{x=0}=
\end{equation*}
\begin{equation*}
    =\frac{tr_{V_{\nu}}\left(H_aH_b e^{x}\right)tr_{V_{\nu}}\left(e^{x}\right)-tr_{V_{\nu}}\left(H_a e^{x}\right)tr_{V_{\nu}}\left(H_b e^{x}\right)}{\left(tr_{V_{\nu}}\left(e^{x}\right)\right)^2}|_{x=0}=
    \end{equation*}
    \begin{equation}
    =\frac{tr_{V_{\nu}}\left(H_a H_b \right)tr_{V_{\nu}}(1)}{\left(tr_{V_{\nu}}(1)\right)^2}=
    \frac{tr_{V_{\nu}}\left(H_a H_b \right)}{\dim V_{\nu}} .
\end{equation}
To find the numerator choose a basis $e_i$ in $\mathfrak{g}$. If $(\;.\;,\;.\;)$ is the Killing form and
\[
   Q_{ij}=(e_i,e_j)=tr_{ad}(e_ie_j),\;\;\; Q^{ij}Q_{jk}=\delta_k^i .
\]
then for the Casimir element we have
\[
 c_2=Q^{ij}e_ie_j   , 
\]
By Schur's lemma
\begin{equation}
    tr_{V_{\nu}}\left(e_ie_j\right)=\tilde{x}_{\nu}Q_{ij} .
\end{equation}
for some $\tilde{x}_{\nu}$. This constant is easy to find taking the trace: 
\[
  tr_{ V_{\nu}}\left(B^{ij}e_ie_j\right)=c_2(\nu)tr_{V_{\nu}}(1)= c_2(\nu)\dim (V_{\nu}) ,  
\]
which gives
\[
   \tilde{x}_{\nu}=
   \frac{c_2(\nu)dim (V_{\nu})}{\dim{\mathfrak{g}}} ,
\]
Assume that  ${H_a}$ is part of the basis $e_i$, then 
\[
    tr_{V_{\nu}}\left(H_a H_b\right)=\frac{c_2(\nu)dim (V_{\nu})}{\dim{\mathfrak{g}}}tr_{ad}\left(H_aH_b\right)=
    \frac{c_2(\nu)dim (V_{\nu})}{\dim{\mathfrak{g}}}d_aC_{ab}.
\]
Finally,
\[
\frac{\partial^2}{\partial x_a\partial x_b}(\ln tr_{V_{\nu}}(e^{x}))|_{x=0}=
\frac{c_2(\nu)}{\dim{\mathfrak{g}}}d_aC_{ab}.
\]
\end{proof}

Denote $x=\frac{1}{dim(\gf)}\sum_{k=1}^m \tau_k c_2(\nu_k)$. As a consequence of the lemma above and 
the asymptotical  formula for $x_a$ in terms of $\xi_a$ we have
\[
x_a=\frac{\xi_a}{x}+O(\xi^2) .
\]
Thus, for small $\xi$ we have
\[
\prod_{\alpha>0} (x,\alpha)=x^{-|\Delta_+|}\Delta(\xi)(1+o(1)) .
\]

Recall that the matrix $K$ is defined as
\[
K_{ab}=-\frac{\pa^2S(\tau,\xi)}{\pa \xi_a\pa \xi_b}=\sum_{bd}B_{ad}(D^{-1})_{dc}B_{cb} .
\]
Lemma \ref{L1} implies that 
\[
D_{ab}|_{\xi=0}=\frac{\pa^2 f(\tau, y)}{\pa y_a\pa y_b}|_{y=0}=xB_{ab} .
\]
Thus, $K=B/x$ and for small $\xi$ we have
\[
S(\tau, \xi)=S(\tau,0)-\frac{1}{2x} \xi_aB_{ab}\xi_b+O(\xi^3) .
\]

Rescaling random variable $\xi$ as $\xi_b=\sqrt{\epsilon x} a_b$ we obtain the following asymptotic
for the probabilities in Plancherel distribution:
\begin{equation}\label{plas}
p_\lambda^{(N)}=\left(\frac{\epsilon}{x}\right)^{\frac{r}{2}} \frac{\sqrt{detB}}{(2\pi)^{\frac{r}{2}}}\prod_{\alpha>0}\frac{(a,\alpha)^2}{(\rho,\alpha)}
e^{-\frac{1}{2}\sum_{b,c} a_bB_{bc}a_c}(1+o(1)) .
\end{equation}

Let $h$ be a continuous bounded function on $\RR^r_{\geq 0}$. The asymptotic (\ref{plas}) implies
the convergence 
\[
\sum_\lambda h(\sqrt{\frac{\epsilon}{x}}\lambda)p_\lambda^{(N)}\to \int_{\RR^r_{\geq 0}}h(a)p(a)da_1\dots da_r ,
\]
where the density function $p(a)$ is given in (\ref{Pl}). In other words, the Plancherel measure  weakly converges to $p(a)da_1\dots da_r$. 

\section{The asymptotic for $\gf=sl_{n+1}$, $m=1$, $\nu=\omega_1$ via the hook length formula.}
\label{ex-hook} 
In this section we recall the derivation of the pointwise asymptotic of multiplicities  and of the Plancherel probability measure 
using the Schur-Weyl duality and the hook length formula from \cite{K1}.

\subsection{The asymptotic of multiplicity}
Let us first review known results on multiplicities of irreducible $sl_{n+1}$ modules 
in the $N$-th tensor power of the vector representation. 
Due to the Schur-Weyl duality we have
\[
    (\mathbb{C}^{n+1})^{\otimes N}\simeq \oplus_\lambda W_\lambda^{(N)}\otimes V_\lambda^{(n+1)},
 \]
where $V_\lambda^{(n+1)}$ is the irreducible $sl_{n+1}$ module with the highest weight $\lambda$ corresponding to the partition $l_1\geq\dots \geq\l_{n+1}\geq 0$, $\sum_{i=1}^{n+1}l_i=N$ and in the simple root basis $\lambda_a=l_a-l_{a+1}$. The space $W_{\lambda}^{(N)}$ is an irreducible $S_N$ module corresponding to the partition $l$; $m_\lambda^{(N)}=dim(W_{\lambda}^{(N)})$ is the multiplicity of $V_\lambda \hookrightarrow(\mathbb{C}^{n+1})^{\otimes N}$. Here the Lie algebra $sl_{n+1}$ acts diagonally and  the symmetric group $S_N$ permutes the factors.

The multiplicity function $m_\lambda^{(N)}$ (or the dimension of $W_{\lambda}^{(N)}$) is determined by the hook length formula, see, for example \cite{K2}:
\[
m_\lambda^{(N)}=N!\frac{\prod_{i<j}\left(l_i-l_j-i+j\right)}
{\prod_{i=1  }^{n+1}\left(l_i+n+1-i\right)!}    .
\]
Using the Stirling formula 
\[
    N!=\sqrt{2\pi N}e^{N\ln N-N}\left(1+O\left(\frac{1}{N}\right)\right),
\]
we obtain the following asymptotics for multiplicities for large $N$ and $l_i$:
\[
m_\lambda^{(N)}=
\frac{\sqrt{2\pi N} e^{N\ln N-N }\prod_{i<j}\left(l_i-l_j\right)}{(\sqrt{2\pi})^{n+1}\prod_{i=1  }^{n+1}l_i^{n+1-i+1/2}e^{l_i\ln l_i-l_i}}
\left(1+O\left(\frac{1}{N}\right)\right).
\]
Now, assume $N=\tau/\epsilon,\;\;l_i=\sigma_i/\epsilon$, where $\tau,\sigma_i$ are finite and $\epsilon\to 0$. Note that $\tau,\sigma_i$ satisfy the condition $\sum_{i=1}^{n+1}\sigma_i=\tau,\;\;\;\tau\geq\sigma_1\geq\dots\geq\sigma_{n+1}\geq 0$.

Then in the limit $\epsilon\to 0$ we obtain the asymptotic of the multiplicity function:
\begin{equation}
\label{multg}
m_\lambda^{(N)}=
\left(\frac{\epsilon}{2\pi}\right)^{\frac{n}{2}}\tau^{\frac{1}{2}}
\prod_{i<j}\left(\sigma_i-\sigma_j\right)\prod_{i=1  }^{n+1}\sigma_i^{-n+i-3/2}
e^{\frac{1}{\epsilon} S(\tau,\sigma)}
\left(1+O(\epsilon)\right),
\end{equation}
where
\begin{equation}
\label{slns}
    S(\tau,\sigma)=\tau\ln\tau -\sum_{i=1}^{n+1}\sigma_i\ln\sigma_i.
\end{equation}
In the section \ref{ex-leg} we will show that this formula matches the one from Theorem 1. 

\subsection{The asymptotic of the Plancherel probability distribution.}

Now let us find the asymptotical probability distribution for the Plancherel probability measure which was
first studied by Kerov in \cite{K1}. 
\[
p_{\lambda}^{(N)}=\frac{  m_{\lambda}^{(N)}\dim V^{\lambda}}{(n+1)^N} ,
\]
\subsubsection{}Dimensions of irreducible $sl_{n+1}$-modules are given by the Weyl formula 
\[
  \dim V^{\lambda}=\frac{\prod_{i\leq j}(l_i-l_j)}{\prod_{k=1}^{n+1}k!}.
\]

First, let us find the asymptotic of $p_\lambda$ when $N=\tau/\epsilon,\;\;l_i=\sigma_i/\epsilon$, $\epsilon\to 0$ and  $\tau,\sigma_i$ remain finite with $\sum_{i=1}^{n+1}\sigma_i=\tau,\;\;\;\tau\geq\sigma_1\geq\dots\geq\sigma_{n+1}\geq 0$.
When $\epsilon\to 0$ we have
\[
\frac{\prod_{i\leq j}(l_i-l_j)}{\prod_{k=1}^{n+1}k!}\simeq  
  \epsilon^{\frac{(n+1)^2-n-1}{2}}\frac{\prod_{i\leq j}(\sigma_i-\sigma_j)}{\prod_{k=1}^{n+1}k!} .
\]
Combining this with the asymptotic of the multiplicity we obtain the following pointwise asymptotic of $p_\lambda$:
\[
p_\lambda=\left(\frac{\epsilon}{2\pi}\right)^{\frac{n^2+2n}{2}}
\frac{\prod_{i<j}\left(\sigma_i-\sigma_j\right)^2}{\prod_{k=1}^{n}k!}\prod_{i=1}^{n+1}\sigma_i^{-n+i-3/2}
e^{\frac{1}{\epsilon} (S(\tau,\sigma)-\tau\ln(n+1))}
\left(1+O\mathcal{O}(\epsilon)\right),
\]

\subsubsection{}Now, let us first find the maximum of the large deviation rate function $S(\tau,\sigma)$ on the hypersurface $\sum_{i=1}^{n+1}\sigma_i=\tau$. 
For this we can use the method of Lagrange multipliers and consider
\[
S_\alpha(\tau,\sigma)=S(\tau,\sigma)+\alpha (\sum_{i=1}^{n+1}\sigma_i-\tau) .
\]
Critical points of $S(\tau,\sigma)$ are solutions to $\frac{\pa S_\alpha}{\pa \sigma_i}=0$ and $\frac{\pa S_\alpha}{\pa \alpha}=0$
which gives:
\begin{equation}\label{constr}
-(\ln\sigma_i+1)+\alpha=0, \ \  \sum_{i=1}^{n+1}\sigma_i=\tau
\end{equation}
This system has unique solution 
\[
    \sigma_i=\frac{\tau}{n+1}.
\]
Now let us study the behaviour of $m_\lambda^N$ in the vicinity of this critical point. For this rescale
random variables $\sigma_i$ as:
\begin{equation}
\label{fluct}
  \sigma_i=  \frac{\tau}{n+1}+\sqrt{\epsilon}a_i .
\end{equation}
Since $\sigma_i$ are constrained  (\ref{constr}), we should have $\sum_{i=1}^{n+1}a_i=0$
and since $\sigma_i\geq \sigma_{i+1}\geq 0$ we should have $a_i\geq a_{i+1}$.
Expanding $S(\tau,\sigma)$ in the Taylor series around $\sigma_i=\tau/(n+1)$ we have
\[
 S(\tau,\sigma)=S(\tau,\tau/(n+1))-\frac{n+1}{\tau} \sum_{i=1}^{n+1}\frac{\epsilon a_i^2}{2} +O(\epsilon^{3/2}) .
\]

For the pointwise asymptotic of the asymptotic Plancherel probability distribution 
in the vicinity of the critical point we obtain
\begin{equation}
   p_\lambda^{(N)}=
\left(\frac{\epsilon}{2\pi}\right)^{\frac{n}{2}}\frac{1}{1!\cdot 2!\cdot\dots n!}
\tau^{\frac{-(n+1)^2+1}{2}}
\left(n+1\right)^{\frac{(n+1)^2}{2}}
\prod_{i<j}\left(a_i-a_j\right)^2
e^{-\frac{1}{2} \frac{n+1}{\tau}\sum_{i+1}^{n+1}a_i^2}
\left(1+O(\epsilon)\right).  
\end{equation}
This implies  that the probability measure $p_\lambda$ converges weakly to the probability distribution of the hyperplane $\sum_i a_i=0$ in $\RR^{n+1}$ with the density function
\[
p(a)=\left(\frac{1}{2\pi}\right)^{\frac{n}{2}}\frac{1}{1!\cdot 2!\cdot\dots n!}
\tau^{\frac{-(n+1)^2+1}{2}}
\left(n+1\right)^{\frac{(n+1)^2}{2}}
\prod_{i<j}\left(a_i-a_j\right)^2
e^{-\frac{1}{2} \frac{n+1}{\tau}\sum_{i+1}^{n+1}a_i^2}
\left(1+O(\epsilon)\right)
\]
This is exactly the result of \cite{K1}.

\section{The asymptotic of multiplicities  for $\gf=sl_{n+1}$, $m=1, \nu=\omega_1$  via character identities.}\label{ex-leg}

Here we will show that for $N$-the tensor power of the vector representation of $sl_{n+1}$ the asymptotic of the multiplicity $ m_{\lambda}^N$ derived from Theorem 1 
\[
    m_{\lambda}^N=
    \epsilon^{\frac{r}{2}}\frac{\sqrt{detK}}{(2\pi)^{\frac{r}{2}}}\Delta(x)e^{-(\rho,x)}
e^{\frac{1}{\epsilon}S(\tau,\xi)}\left(1+O(\epsilon)\right)
\]
coincides with  the asymptotic obtained from the hook length formula.

\subsection{The large deviation rate function $S(\tau,\xi)$} For $sl_{n+1}$ the simple roots are $\alpha_i=e_i-e_{i+1},\;\;i=1\dots n$. The  first fundamental weight is $\omega_1=\frac{1}{n+1}\left(ne_1-e_2-e_3-\dots-e_n\right)$. Weights of the first fundamental representation are given by $$\mu_k=\frac{1}{n+1}\left(e_1-e_2-\dots-e_{k-1}+ne_k-e_{k+1}-\dots-e_n\right),\;\; k=1\dots n+1 ,$$ where $\mu_1= \omega_1$ is the highest weight.

The character of the first fundamental module:
\[
    \chi_{\omega_1}(e^y)=e^{(y,\mu_1)}+ e^{(y,\mu_2)}
    +\dots+e^{(y,\mu_{n+1})}=e^{y_1}+e^{-y_2+y_3}+e^{-y_3+y_4}+\dots+e^{-y_{n-1}+x_n}+e^{-y_n} ,
\]
where $(y,\mu_i)$ is the scalar product in $
\mathbb{R}^{n+1}$ and $y=y_1\alpha_1+y_2\alpha_2+\dots+y_n\alpha_n$.

According to the proposed method, the large deviation rate function for the first fundamental representation is given by the following expression:
\begin{equation}
\label{sw1}
    S(\tau,\xi)=\tau\ln(\chi_{\omega_1}(x))-B_{ab}{x}_a,\xi_b,
    \end{equation}
where $x$ is
determined from the system of equations (\ref{max2}):
\begin{equation}
\label{max5}
    \tau\frac{\partial}{\partial x_a}\ln \chi_{\omega_1}(x)=\sum_b B_{ab}\xi_b.
\end{equation}
\begin{lemma}
The solution of system (\ref{max5}) is determined by 
\begin{equation*}
    \begin{cases}
   x_1=\ln\left(\frac{\chi}{\tau}\right)+\ln\left(\frac{\tau}{n+1}+\xi_1\right)\\
   \dots\\
   x_{i+1}=(i+1)\ln\left(\frac{\chi}{\tau}\right)+\ln\left(\frac{\tau}{n+1}+\xi_1\right)
   +\ln\left(\frac{\tau}{n+1}-\xi_1+\xi_2\right)+\dots+
   \ln\left(\frac{\tau}{n+1}-\xi_i+\xi_{i+1}\right)\\
   \dots\\
   x_n=-\ln\left(\frac{\chi}{\tau}\right)-\ln\left(\frac{\tau}{n+1}-\xi_n\right)
\end{cases} ,
\end{equation*}
where $\chi=\chi_{\omega_1}$ denotes the character of the first fundamental module.
\begin{proof}Firstly, we write the equations of (\ref{max5}) explicitly:
\begin{equation}
\label{slnsys}
    \begin{cases}
    \frac{\tau}{\chi}\left(e^{x_1}-e^{-x_1+x_2}\right)=2\xi_1-\xi_2\\
    \frac{\tau}{\chi}\left(e^{-x_{i-1}+x_{i}}-e^{-x_i+x_{i+1}}\right)=-\xi_{i-1}+2\xi_i-\xi_{i+1},\;\;i=2\dots n-1\\
    \frac{\tau}{\chi}\left(e^{-x_{n-1}+x_n}-e^{-x_n}\right)=-\xi_{n-1}+2\xi_n
\end{cases} .
\end{equation}
To solve (\ref{slnsys}) let us introduce the variables $z_i=e^{x_i}$. So (\ref{slnsys}) can be written as:
\[
    \begin{cases}
    z_1-z_1^{-1}z_2=\frac{\chi}{\tau}(2\xi_1-\xi_2)\\
    z_{i-1}^{-1}z_i-z_i^{-1}z_{i+1}=\frac{\chi}{\tau}(-\xi_{i-1}+2\xi_i-\xi_{i+1}),\;\;i=2\dots n-1\\
      z_{n-1}^{-1}z_n-z_n^{-1}=\frac{\chi}{\tau}(-\xi_{n-1}+2\xi_n)
\end{cases} .
\]
From the first equation we can express $z_1^{-1}z_2$ in terms of $z_1$ and then substitute it into the next equation. Step by step, we can express $z_i^{-1}z_{i+1}$ in terms of $z_1$:
\begin{equation}
\label{slnsys2}
    \begin{cases}
   z_1^{-1}z_2=z_1-\frac{\chi}{\tau}(2\xi_1-\xi_2)\\
    z_i^{-1}z_{i+1}=z_1-\frac{\chi}{\tau}(\xi_{1}+\xi_i-\xi_{i+1}),\;\;i=2\dots n-1\\
      z_n^{-1}=z_1-\frac{\chi}{\tau}(\xi_{1}+\xi_n)
\end{cases} .
\end{equation}
We then need to express $z_1$ in terms of $\chi$. This can be obtained by noting that
\begin{equation}\label{chiz}
    \chi=z_1+z_1^{-1}z_2+z_2^{-1}z_3+
    \dots+z_{n-1}^{-1}z_n+z_n^{-1} .
\end{equation}
Substiting $z_i^{-1}z_{i+1}$ from (\ref{slnsys2})  into (\ref{chiz}) we get
\[
    z_1=\frac{\chi}{\tau}\left(\frac{\tau}{n+1}+\xi_1\right) .
\]
Now, we can solve (\ref{slnsys2}) in terms of $\chi$ and ${\xi_i}$:
\begin{equation}
\label{slnz}
    \begin{cases}
   z_1=\frac{\chi}{\tau}\left(\frac{\tau}{n+1}+\xi_1\right)\\
   z_{i+1}=\left(\frac{\chi}{\tau}\right)^{i+1}\left(\frac{\tau}{n+1}+\xi_1\right)
   \left(\frac{\tau}{n+1}-\xi_1+\xi_2\right)\dots
   \left(\frac{\tau}{n+1}-\xi_i+\xi_{i+1}\right)\\
   z_n^{-1}=\frac{\chi}{\tau}\left(\frac{\tau}{n+1}-\xi_n\right)
\end{cases} .
\end{equation}
Finally, we can solve (\ref{slnsys})in terms of $\chi$ and ${\xi_i}$:
\begin{equation}
\label{slnz}
    \begin{cases}
   x_1=\ln\left(\frac{\chi}{\tau}\right)+\ln\left(\frac{\tau}{n+1}+\xi_1\right)\\
   x_{i+1}=(i+1)\ln\left(\frac{\chi}{\tau}\right)+\ln\left(\frac{\tau}{n+1}+\xi_1\right)
   +\ln\left(\frac{\tau}{n+1}-\xi_1+\xi_2\right)+\dots+
   \ln\left(\frac{\tau}{n+1}-\xi_i+\xi_{i+1}\right)\\
   x_n=-\ln\left(\frac{\chi}{\tau}\right)-\ln\left(\frac{\tau}{n+1}-\xi_n\right)
\end{cases} .
\end{equation}
\end{proof}
\end{lemma}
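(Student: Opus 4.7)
The plan is to linearize the system (\ref{max5}) by the change of variables $z_a = e^{x_a}$, which converts the transcendental equations into an algebraic ladder that can be solved by telescoping.

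First I would compute $\tau\,\partial_{x_a}\ln\chi_{\omega_1}(x)$ explicitly. Writing $\chi = z_1 + z_1^{-1}z_2 + z_2^{-1}z_3 + \cdots + z_{n-1}^{-1}z_n + z_n^{-1}$, each variable $x_a$ appears in exactly two summands, so
\[
\tau\,\partial_{x_a}\ln\chi_{\omega_1} \;=\; \frac{\tau}{\chi}\bigl(z_{a-1}^{-1}z_a - z_a^{-1}z_{a+1}\bigr),
\]
with the boundary convention $z_0 = z_{n+1} = 1$. Since $B$ is the (simply laced) Cartan matrix of $\slnn$, the right-hand side of (\ref{max5}) is the second difference $-\xi_{a-1}+2\xi_a-\xi_{a+1}$ (with $\xi_0=\xi_{n+1}=0$). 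Thus (\ref{max5}) becomes the linear difference system
\[
z_{a-1}^{-1}z_a - z_a^{-1}z_{a+1} \;=\; \frac{\chi}{\tau}\bigl(-\xi_{a-1}+2\xi_a-\xi_{a+1}\bigr), \qquad a = 1,\dots,n.
\]

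Next I would solve this system telescopically. The $a=1$ equation expresses $z_1^{-1}z_2$ in terms of $z_1$, $\xi_1$, $\xi_2$; substituting into the $a=2$ equation produces $z_2^{-1}z_3$; and by induction one finds the uniform formula
\[
z_i^{-1}z_{i+1} \;=\; z_1 - \tfrac{\chi}{\tau}\bigl(\xi_1 + \xi_i - \xi_{i+1}\bigr), \qquad i=1,\dots,n,
\]
where the collapse of the right-hand side is exactly the partial sum of the second differences. To pin down $z_1$ itself I would use the defining identity $\chi = z_1 + \sum_{i=1}^{n-1} z_i^{-1}z_{i+1} + z_n^{-1}$: substituting the telescoped expressions and collecting coefficients of $\chi/\tau$ gives a single linear equation whose solution is $z_1 = \tfrac{\chi}{\tau}\bigl(\tfrac{\tau}{n+1}+\xi_1\bigr)$.

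Finally, since $z_{i+1} = z_1\cdot (z_1^{-1}z_2)\cdots (z_i^{-1}z_{i+1})$, the ladder formulas combine into a product, and taking logarithms yields the stated closed expressions for $x_1, x_{i+1}, x_n$. The main obstacle I anticipate is the bookkeeping of the boundary terms at $a=1$ and $a=n$, which have a different shape than the generic equation and which are responsible for the different sign patterns in the formulas for $x_1$ versus $x_n$; once $z_1$ is correctly identified from the character-sum constraint, the rest is a mechanical telescoping.
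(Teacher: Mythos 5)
Your proposal is correct and takes essentially the same route as the paper: pass to $z_a=e^{x_a}$, telescope the tridiagonal system to express each $z_i^{-1}z_{i+1}$ as $z_1$ minus a partial sum of second differences of $\xi$, then use the character sum $\chi=z_1+\sum z_i^{-1}z_{i+1}+z_n^{-1}$ to solve for $z_1$, and finally take logarithms. Your uniform closed form $z_i^{-1}z_{i+1}=z_1-\tfrac{\chi}{\tau}(\xi_1+\xi_i-\xi_{i+1})$ with boundary conventions $\xi_0=\xi_{n+1}=0$ reproduces the paper's three-case display exactly.
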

\begin{lemma}The large deviation rate function (\ref{sw1}) is given by
\begin{equation}
\label{slns1}
    S(\tau,\sigma)=\tau\ln\tau -\sum_{i=1}^{n+1}\sigma_i\ln\sigma_i.
\end{equation}
\begin{proof}The large deviation rate function in terms of $\chi$ and ${\xi_i}$:
\begin{eqnarray*}
 S(\tau,\xi)=\tau\ln(\chi)-C_{ab}x_a\xi_b
   =\tau\ln(\chi)-\ln\left(\frac{\chi}{\tau}\right)\left(
    (n+1)\xi_{n-1}-(n+1)\xi_n\right)-\\
   -\ln\left(
    \frac{\tau}{n+1}+\xi_{1}\right)(\xi_1+\xi_{n-1}-\xi_n)
    -\ln\left(
    \frac{\tau}{n+1}-\xi_{1}+\xi_{2}\right)(-\xi_{1}+\xi_{2}+\xi_{n-1}-\xi_{n})-\dots\\
   \dots
    -\ln\left(
    \frac{\tau}{n+1}-\xi_{n-2}+\xi_{n-1}\right)(-\xi_{n-2}+2\xi_{n-1}-\xi_{n})
    -\ln\left(
    \frac{\tau}{n+1}-\xi_{n}\right) .
    \end{eqnarray*}
To obtain expression for $\ln\chi$ in terms of ${\xi_i}$ we substitute solutions of (\ref{slnz}) into (\ref{chiz}):
\begin{eqnarray*}
    \chi=\frac{\chi}{\tau}\left(\frac{\tau}{n+1}+\xi_1\right)+\frac{\chi}{\tau}\left(\frac{\tau}{n+1}-\xi_1+\xi_2\right)+\dots+
    \frac{\chi}{\tau}\left(\frac{\tau}{n+1}-\xi_{n-2}+\xi_{n-1}\right)+\\
    \\+\frac{1}{\left(\frac{\chi}{\tau}\right)^n\left(
    \frac{\tau}{n+1}+\xi_1\right)
    \left(
    \dots+\frac{\tau}{n+1}-\xi_1+\xi_2\right)\dots
    \left(
    \frac{\tau}{n+1}-\xi_{n-2}+\xi_{n-1}\right)
     \left(
    \frac{\tau}{n+1}-\xi_{n}\right)
    }+
    \frac{\chi}{\tau}\left(\frac{\tau}{n+1}-\xi_n\right) ,
\end{eqnarray*}
which yields
\begin{equation}
    \chi^{n+1}=\frac{\tau^{n+1}}
    {
    \left(
    \frac{\tau}{n+1}+\xi_{1}\right)
    \left(
    \frac{\tau}{n+1}-\xi_{1}+\xi_{2}\right)
    \dots
   \left(
    \frac{\tau}{n+1}-\xi_{n-1}+\xi_{n}\right)
    \left(
    \frac{\tau}{n+1}-\xi_{n}\right)} ,
\end{equation}
and 
    \begin{eqnarray*}
    \label{lnc}
    \ln\chi=\frac{1}{n+1} (\ln\tau^{n+1}-\ln\left(\frac{\tau}{n+1}+\xi_{1}\right)
    -\ln\left(\frac{\tau}{n+1}-\xi_{1}+\xi_{2}\right)-\dots\\ 
    \dots-\ln\left(
    \frac{\tau}{n+1}-\xi_{n-1}+\xi_{n}\right)-
    \ln\left(
    \frac{\tau}{n+1}-\xi_{n}\right)) .
    \end{eqnarray*}
    Substituting $\ln\chi$ into the large deviation rate function we finally obtain:
    \begin{eqnarray*}
        S(\tau,\xi)=\tau\ln\tau-
        \left(\frac{\tau}{n+1}+\xi_1\right)\ln \left(\frac{\tau}{n+1}+\xi_1\right)-
        \left(\frac{\tau}{n+1}-\xi_1+\xi_2\right)\ln \left(\frac{\tau}{n+1}-\xi_1+\xi_2\right)-\dots\\
        \dots-
        \left(\frac{\tau}{n+1}-\xi_{n-1}+\xi_{n}\right)\ln \left(\frac{\tau}{n+1}-\xi_{n-1}+\xi_{n}\right)-
         \left(\frac{\tau}{n+1}-\xi_n\right)\ln \left(\frac{\tau}{n+1}-\xi_n\right)=\\
         =\tau\ln\tau-\sigma_1\ln\sigma_1-\sigma_2\ln\sigma_2-\dots-\sigma_{n+1}\ln\sigma_{n+1} ,
    \end{eqnarray*}
where
\[
\sigma_1=\frac{\tau}{n+1}+\xi_1,\;\; \dots\;\;,\;\; \sigma_i=\frac{\tau}{n+1}+\xi_i-\xi_{i-1},\;\; \dots\;\;,\;\; \sigma_{n+1}=\frac{\tau}{n+1}-\xi_n
\]
\end{proof}
\end{lemma}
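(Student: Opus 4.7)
The plan is to bypass the explicit manipulation of the simple-root basis by performing the Legendre transform in variables adapted to the weight decomposition of the first fundamental representation. The key observation is that the character factorizes as a sum of exponentials: if one writes $x\in\mathfrak{h}_{\mathbb{R}}$ in $gl_{n+1}$-type coordinates $(u_1,\ldots,u_{n+1})$ with the constraint $\sum_i u_i=0$, then $\chi_{\omega_1}(e^x)=\sum_{i=1}^{n+1}e^{u_i}$. Since $f(\tau,x)=\tau\ln\chi_{\omega_1}(e^x)$ depends on $x$ only through this log-sum-exp, the natural dual variables for the Legendre transform are scaled row lengths $\sigma_i$ rather than the simple-root coordinates $\xi_a$.

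Concretely, I would first pass to the $u_i$ variables via the triangular relation $u_i=x_i-x_{i-1}$ (with $x_0=x_{n+1}=0$), and then compute the dual variables
\[
\sigma_i=\frac{\partial f}{\partial u_i}=\tau\,\frac{e^{u_i}}{\chi_{\omega_1}(e^x)},
\]
which automatically satisfy $\sigma_i\geq 0$ and $\sum_i\sigma_i=\tau$. Inverting this gives $u_i=\ln\sigma_i+\ln(\chi_{\omega_1}/\tau)$, and plugging into the Legendre formula
\[
S(\tau,\sigma)=f(\tau,x)-\sum_i\sigma_i u_i=\tau\ln\chi_{\omega_1}-\sum_i\sigma_i\ln\sigma_i-\tau\bigl(\ln\chi_{\omega_1}-\ln\tau\bigr),
\]
the $\ln\chi_{\omega_1}$ terms cancel by the sum constraint, leaving precisely $\tau\ln\tau-\sum_i\sigma_i\ln\sigma_i$.

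The remaining step is to identify the $\sigma_i$ produced by this procedure with those displayed in the statement: $\sigma_1=\tau/(n+1)+\xi_1$, $\sigma_i=\tau/(n+1)+\xi_i-\xi_{i-1}$ for $2\leq i\leq n$, and $\sigma_{n+1}=\tau/(n+1)-\xi_n$. This follows from dualizing the triangular change-of-basis $u_i=x_i-x_{i-1}$ and accounting for the shift by $\tau/(n+1)$, which comes from projecting onto the traceless hyperplane $\sum_i u_i=0$. Matching the critical-point equation (\ref{max5}) in simple-root coordinates with the relation $\sigma_i=\tau e^{u_i}/\chi_{\omega_1}$ confirms the identification.

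The main obstacle is purely notational: one must track carefully the two equivalent normalizations (whether the $gl_{n+1}$-center is retained or quotiented out) and the triangular dictionary between partition entries, scaled row lengths $\sigma_i$, and simple-root coefficients $\xi_a$. Once this bookkeeping is in place, the log-sum-exp structure of $\chi_{\omega_1}$ collapses the Legendre transform directly to the entropy expression, avoiding the need to solve the explicit nonlinear system (\ref{slnsys}) for the $x_a$ as an intermediate step.
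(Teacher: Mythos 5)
Your proposal is correct, and it takes a genuinely different route from the paper. The paper proves the lemma by brute force in simple-root coordinates: it relies on the preceding lemma's explicit solution of the nonlinear system (\ref{slnsys}) for $z_i=e^{x_i}$, extracts $\chi^{n+1}$ and hence $\ln\chi$ in terms of the $\xi_a$, and then substitutes everything into $S=\tau\ln\chi-(x,\xi)$ and simplifies term by term. You instead exploit the log-sum-exp structure of $\chi_{\omega_1}$ in weight coordinates $u_i=x_i-x_{i-1}$ (with $x_0=x_{n+1}=0$, so $\sum_i u_i=0$), where the Legendre transform collapses to the standard entropy duality: the dual variables $\sigma_i=\tau e^{u_i}/\chi$ lie on the simplex $\sum_i\sigma_i=\tau$, and $S=\tau\ln\chi-\sum_i\sigma_iu_i=\tau\ln\tau-\sum_i\sigma_i\ln\sigma_i$ follows in two lines. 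The two points you defer to ``bookkeeping'' do check out and are short: first, $(x,\xi)=\sum_{a,b}x_aC_{ab}\xi_b=\sum_i(x_i-x_{i-1})(\xi_i-\xi_{i-1})=\sum_iu_i\sigma_i$, since adding the constant $\tau/(n+1)$ to the differences $\xi_i-\xi_{i-1}$ costs nothing because $\sum_iu_i=0$; second, the critical-point equations (\ref{max5}) in difference form say exactly that $\tau e^{u_i}/\chi-\bigl(\tfrac{\tau}{n+1}+\xi_i-\xi_{i-1}\bigr)$ is independent of $i$, and summing over $i$ forces this constant to vanish, which is the identification $\sigma_i=\tau e^{u_i}/\chi$ you need. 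What your approach buys is a conceptual and much shorter derivation that makes the entropy form of the rate function transparent and sidesteps the explicit inversion (\ref{slnz}); what the paper's computation buys is the explicit formulas for $x$, $e^{(\rho,x)}$ and $\Delta(x)$ in terms of the $\sigma_i$, which it reuses in the subsequent subsections when matching the full subexponential prefactor against the hook-length asymptotics, so your shortcut proves this lemma but does not replace those later computations.
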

The expression (\ref{slns1}) coincides with (\ref{slns}) which we obtained from hook length formula.

\subsection{The matrix $K$ and its determinant} Now let us compute the determinant of $K(x)$ when $x$ is the Legendre dual to $\xi$.

We have:
\[
    \frac{\partial S}{\partial \xi_i}=
    -\ln\left(\frac{\tau}{n+1}-\xi_{i-1}+\xi_i\right)
    +
    \ln\left(\frac{\tau}{n+1}-\xi_{i}+\xi_{i+1}\right) ,
\]
From here we derive 
\[
    \frac{\partial^2 S}{\partial^2 \xi_i}=
    -\frac{1}{\left(\frac{\tau}{n+1}-\xi_{i-1}+\xi_i\right)}
    -\frac{1}
    {\left(\frac{\tau}{n+1}-\xi_{i}+\xi_{i+1}\right)}= -\frac{1}{\sigma_i}-\frac{1}{\sigma_{i+1}},
\]
\[
    \frac{\partial^2 S}{\partial \xi_i\partial \xi_{i-1}}=
    \frac{1}{\left(\frac{\tau}{n+1}-\xi_{i-1}+\xi_i\right)}= \frac{1}{\sigma_i},
\]
\[
    \frac{\partial^2 S}{\partial \xi_i\partial \xi_{i+1}}=
    \frac{1}{\left(\frac{\tau}{n+1}-\xi_{i}+\xi_{i+1}\right)}=\frac{1}{\sigma_{i+1}} ,
\]
\[
    \frac{\partial^2 S}{\partial \xi_i\partial \xi_{j}}=0\;\;\;j\neq i,i-1,i+1 .
\]
From here we derive
\[
\det(K)=\frac{\tau}{\sigma_1\dots \sigma_{n+1}}
\]

\subsection{The factor $\Delta(x)e^{-(\rho, x)}$}
Finally, we need to compute $\Delta(x)$ and $e^{-(\rho,x)}$ .
The Weyl vector $\rho$ is the sum of fundamental weights $\omega_i$:
\[
    \omega_i=
    \frac{n+1-i}{n+1}e_1+ \dots +\frac{n+1-i}{n+1}e_i- \frac{i}{n+1}e_{i+1} \dots -\frac{i}{n+1}e_{n+1} .
\]
which gives
\[
    \rho=
    \frac{n}{2}e_1+\frac{n-2}{2}e_2+  \dots+\frac{-n+2}{2}e_{n-1}+\frac{-n}{2}e_{n+1} ,
\]
For $x=\sum_{i=1}^nx_i\alpha_i$, where $\alpha_i=e_i-e_{i+1}$, we have
\[
  e^{(\rho,x)}
  =e^{x_1}e^{x_2}\dots e^{x_n} .
\]

\begin{lemma}\label{1}
\begin{equation}
\label{ii}
    e^{(\rho,x)}=
    \sigma_1^{\frac{n}{2}}\sigma_{2}^{\frac{n-2}{2}}\sigma_{3}^{\frac{n-4}{2}}\dots  \;\;\;\sigma_{n+1}^{-\frac{n}{2}} .
\end{equation}
\end{lemma}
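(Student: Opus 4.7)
The plan is to compute $(\rho,x)=x_1+x_2+\cdots+x_n$ directly from the explicit formulas for $x_i$ obtained in the preceding lemma. First I would put each $x_i$ in the uniform form
\[
x_i = i\,\ln\!\frac{\chi}{\tau} + \sum_{j=1}^{i}\ln\sigma_j,
\qquad i=1,\dots,n,
\]
which agrees with the displayed expressions for $x_1,\dots,x_{n-1}$ and reconciles with the alternate expression for $x_n$ via the identity $\chi^{n+1}=\tau^{n+1}/(\sigma_1\cdots\sigma_{n+1})$ established in the previous lemma.

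Then I would sum over $i$ and interchange the order of summation to get
\[
\sum_{i=1}^{n} x_i \;=\; \frac{n(n+1)}{2}\,\ln\!\frac{\chi}{\tau} \;+\; \sum_{j=1}^{n}(n+1-j)\,\ln\sigma_j.
\]
Next, using $\ln(\chi/\tau)=-\frac{1}{n+1}\sum_{k=1}^{n+1}\ln\sigma_k$ (again from the previous lemma), the first term distributes a contribution of $-\tfrac{n}{2}\ln\sigma_k$ to each of the $n+1$ factors. Collecting coefficients, the exponent of $\ln\sigma_j$ becomes $(n+2-2j)/2$ for $j=1,\dots,n$ and $-n/2$ for $j=n+1$. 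Exponentiating yields exactly
\[
e^{(\rho,x)}=\sigma_1^{\,n/2}\,\sigma_2^{\,(n-2)/2}\cdots \sigma_n^{-(n-2)/2}\,\sigma_{n+1}^{-n/2},
\]
which is the claimed identity.

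There is no real conceptual obstacle: the argument is a careful bookkeeping of an arithmetic progression of exponents, together with a single substitution using the explicit formula for $\chi^{n+1}$ derived earlier. The only thing to double-check is that the ``uniform'' expression for $x_n$ indeed matches the alternative form quoted in the lemma, which is precisely where the identity $(\chi/\tau)^{n+1}=1/(\sigma_1\cdots\sigma_{n+1})$ is used.
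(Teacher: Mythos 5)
Your proof is correct and is essentially the same computation as the paper's, just organized slightly more cleanly: the paper first rewrites each individual $e^{x_i}$ as a product of powers of the $\sigma_j$'s and then multiplies those $n$ expressions, whereas you sum $\sum_{i=1}^n x_i$ first (interchanging the double sum to get the coefficient $n+1-j$) and substitute $\ln(\chi/\tau)=-\tfrac{1}{n+1}\sum_k\ln\sigma_k$ at the end. Both rely on the same two ingredients from the preceding lemma (the formulas for $x_i$ and the identity $\chi^{n+1}=\tau^{n+1}/(\sigma_1\cdots\sigma_{n+1})$), and your observation that the uniform form for $x_n$ agrees with the separately quoted one is exactly the consistency check that identity provides.
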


Indeed, we can use (\ref{slnz}) and (\ref{lnc}) to rewrite $e^{x_i}$ in terms of $\tau$ and ${\xi_i}$ and then in terms of $\sigma$ we obtain:
\begin{eqnarray*}
e^{x_1}=\sigma_1^{\frac{n}{n+1}} \sigma_2^{\frac{-1}{n+1}}\dots\dots\dots\dots\dots \;\;\;\sigma_n^{\frac{-1}{n+1}} \\
e^{x_2}=\sigma_1^{\frac{n-1}{n+1}} \sigma_2^{\frac{n-1}{n+1}}\sigma_3^{\frac{-2}{n+1}}\dots \dots\dots \;\;\;\sigma_{n+1}^{\frac{-2}{n+1}}\\
\dots\dots\dots\dots\dots\dots\dots\dots\\
e^{x_{i+1}}=\sigma_1^{\frac{n-i}{n+1}}\dots \sigma_{i+1}^{\frac{n-i}{n+1}}\sigma_{i+2}^{-\frac{i+1}{n+1}}\dots  \;\;\;\sigma_{n+1}^{-\frac{i+1}{n+1}}\\
e^{x_{n}}=\sigma_1^{\frac{1}{n+1}}\dots\dots\dots\dots\dots \sigma_{n}^{\frac{1}{n+1}}\sigma_{n+2}^{-\frac{n}{n+1}} ,
\end{eqnarray*} 
This proves lemma \ref{1}.

\begin{lemma}
\[
\Delta(x)=\frac{\prod_{i < j}\left(\sigma_i-\sigma_j\right)}{\prod_i^{n+1}\sigma_i^{\frac{n}{2}}}  
\]
\end{lemma}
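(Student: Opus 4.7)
The plan is to unwind the definition of $\Delta(x)$ for $sl_{n+1}$ using the orthonormal basis $e_1,\dots,e_{n+1}$ of the standard realization, since the positive roots are exactly $\alpha_{ij}=e_i-e_j$ with $i<j$. Writing $y_i=(x,e_i)$, we have
\[
\Delta(x)=\prod_{i<j}\bigl(e^{(y_i-y_j)/2}-e^{-(y_i-y_j)/2}\bigr),
\]
so everything reduces to computing the $e^{y_i}$ in terms of $\sigma_1,\dots,\sigma_{n+1}$.

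The first step is to translate between coordinates. Since $x=\sum_{i=1}^{n}x_i\alpha_i$ with $\alpha_i=e_i-e_{i+1}$, we have $y_1=x_1$, $y_i=x_i-x_{i-1}$ for $2\le i\le n$, and $y_{n+1}=-x_n$. Hence $e^{y_1}=z_1$, $e^{y_i}=z_{i-1}^{-1}z_i$ for intermediate $i$, and $e^{y_{n+1}}=z_n^{-1}$, where $z_i=e^{x_i}$ are the variables already solved for in the proof of the previous lemma. The key observation is that formulas (\ref{slnsys2}) together with the explicit $z_1=(\chi/\tau)(\tfrac{\tau}{n+1}+\xi_1)=(\chi/\tau)\sigma_1$ give, after a one-line simplification,
\[
z_1=\tfrac{\chi}{\tau}\sigma_1,\qquad z_{i-1}^{-1}z_i=\tfrac{\chi}{\tau}\sigma_i\;\; (2\le i\le n),\qquad z_n^{-1}=\tfrac{\chi}{\tau}\sigma_{n+1}.
\]
Thus the uniform identity $e^{y_i}=(\chi/\tau)\sigma_i$ holds for every $i=1,\dots,n+1$.

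The second step is then a direct computation: with $e^{y_i}=(\chi/\tau)\sigma_i$, each factor of $\Delta(x)$ becomes
\[
e^{(y_i-y_j)/2}-e^{-(y_i-y_j)/2}=\sqrt{\sigma_i/\sigma_j}-\sqrt{\sigma_j/\sigma_i}=\frac{\sigma_i-\sigma_j}{\sqrt{\sigma_i\sigma_j}},
\]
and the $\chi/\tau$ factors cancel pairwise. Multiplying over all $i<j$ yields the numerator $\prod_{i<j}(\sigma_i-\sigma_j)$, and each $\sigma_k$ appears in the denominator with multiplicity equal to the number of pairs involving it, namely $(k-1)+(n+1-k)=n$, each contributing a square root. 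Therefore
\[
\Delta(x)=\frac{\prod_{i<j}(\sigma_i-\sigma_j)}{\prod_{k=1}^{n+1}\sigma_k^{n/2}},
\]
as claimed.

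The computation is essentially mechanical; the only genuinely non-trivial step is recognizing that the messy expressions for the $z_i^{-1}z_{i+1}$ in the previous lemma collapse uniformly to $(\chi/\tau)\sigma_{i+1}$, so that all $e^{y_i}$ are proportional to $\sigma_i$ with the same proportionality constant. Once this uniformity is established, every remaining step reduces to the Vandermonde-type identity above, and in particular the power $n/2$ in the denominator follows from a trivial counting argument rather than any further representation-theoretic input.
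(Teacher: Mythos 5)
Your proof is correct, and it reaches the result by a genuinely different route from the paper's. You work from the product form $\Delta(x)=\prod_{\alpha\in\Delta_+}\bigl(e^{(x,\alpha)/2}-e^{-(x,\alpha)/2}\bigr)$ and isolate the clean uniform identity $e^{y_i}=(\chi/\tau)\sigma_i$ for all $i=1,\dots,n+1$, which indeed follows by a one-line substitution of $z_1=(\chi/\tau)\sigma_1$ into the recursion (\ref{slnsys2}); the unknown prefactor $\chi/\tau$ then cancels inside each binomial factor, leaving $(\sigma_i-\sigma_j)/\sqrt{\sigma_i\sigma_j}$, and the exponent $n/2$ in the denominator is just the count of positive roots containing a fixed index. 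The paper instead starts from the alternating-sum form $\Delta(x)=\sum_w(-1)^{l(w)}e^{(w(\rho),x)}$, expresses each term as a monomial in $\sigma_1,\dots,\sigma_{n+1}$, pulls a common power out of the sum, and recognizes what remains as the Vandermonde determinant. The two derivations rest on the same underlying computation of $e^{y_i}$ in terms of $\sigma$, but yours has the advantage that the cancellation of $\chi/\tau$ is entirely local and visible, factor by factor, whereas in the paper's version it depends on the exponents of $\chi/\tau$ summing to zero over $i$ for every $w$ --- true, but left implicit; the paper's form in turn makes the Vandermonde structure explicit, which lines up more directly with the hook-length asymptotic of Section \ref{ex-hook}. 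One small point worth adding to your write-up: the identity $e^{(y_i-y_j)/2}-e^{-(y_i-y_j)/2}=\sqrt{\sigma_i/\sigma_j}-\sqrt{\sigma_j/\sigma_i}$ tacitly uses $\sigma_i>\sigma_j>0$ for $i<j$, which holds since $\lambda$ (and hence $\xi$) is taken regular in the dominant chamber.
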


\begin{proof} 
The Weyl denominator $\Delta(x)$ is the sum
\[
    \Delta(x)=\sum_{w}(-1)^{l(w)}e^{(w(\rho),x)}
\]
where $(-1)^{l(w)}$ is the signature of the permutation $w\in S_{n+1}$.  We have:
\[
    w(\rho)=\sum_{i=1}^{n+1}\frac{n+1-w(i)}{2}e_i,
\]
which implies
\[
         e^{(w(\rho),x)}=\prod_{i=1}^{n+1}\sigma_i^{\frac{n+1-w(i)}{2}}    
\]
    
To evaluate $\Delta$ we can bring the factor
 \[
        \prod_i^{n+1}\sigma_i^{\frac{n}{2}} .
 \]
 out of summation. The remaining  polynomial is the Vandermond determinant:
 \[
 \Delta(x)=\sum_{w}(-1)^{l(w)}e^{(w(\rho),x)}=\frac{\prod_{i < j}\left(\sigma_i-\sigma_j\right)}{\prod_i^{n+1}\sigma_i^{\frac{n}{2}}} \]
\end{proof}

\subsection{The comparison} Combining the results of the previous sections we 
conclude that 
\[
\epsilon^{\frac{n}{2}}\frac{\sqrt{\det K}}{(2\pi)^{\frac{n}{2}}}\Delta(x)e^{-(\rho,x)}
e^{\frac{1}{\epsilon}S(\tau,\xi)}(1+o(1))=
\left(\frac{\epsilon}{2\pi}\right)^{\frac{n}{2}}\tau^{\frac{1}{2}}
\prod_{i<j}\left(\sigma_i-\sigma_j\right)\prod_{i=1  }^{n+1}\sigma_i^{-n+i-3/2}
e^{\frac{1}{\epsilon} S(\tau,\sigma)}(1+o(1))
\]
where the function $S(\tau,\xi)$ on the left side is defined as the 
Legendre  transform of $\tau\ln(\chi_{\omega_1}(e^x))$ in $x$ and on the right
it is given by (\ref{slns}) and is derived from the hook formula.

\section{Intermediate scaling}
\label{Int}

In this section we consider the behavior of the character probability measure
\begin{equation}
  \label{stat1}
   p_\lambda^{(N)} =  \frac{m_{\lambda}(\{V_k\}, \{N_k\})\chi_\lambda(e^t)}{\prod_k \chi_{\nu_k}(e^t)^{N_k}} 
\end{equation}
near $t=0$ in the limit when $\epsilon\to 0$, $N_k=\frac{\tau_k}{\epsilon}$,$\lambda=\sqrt{\frac{x}{\epsilon}}b$, and $t=\sqrt{\frac{\epsilon}{x}}u$\footnote{There is an analogue of such asymptotic for every non regular value of $t$ (when $t\subset S\in \hf^{\star}\geq 0$ where $S$ is a stratum of the boundary of the cone $\hf^{\star}_{\geq 0}$). In this case transverse directions to $S$ should be scaled as $\sqrt{\epsilon}$.} where $b\in \hf^*_{\geq 0}$, $u\in  \hf_{\geq 0}$ 
are regular. 

\begin{theorem}
The pointwise asymptotic of $p_\lambda^{(N)}$ when $\epsilon\to 0$ is 
\[
\label{p4}
p_\lambda^{(N)}(t)=\left(\frac{\epsilon}{x}\right)^{\frac{r}{2}}\frac{\sqrt{\det 
B}}{(2\pi)^{\frac{r}{2}}}
\prod_{\alpha\in \Delta_+}
\frac{(b,\alpha)}
{(u,\alpha)}
\sum_w(-1)^we^{(b,w(u))}
e^{-\frac{(b,b)}{2}-\frac{(u,u)}{2}}(1+o(1)) . 
\]
\end{theorem}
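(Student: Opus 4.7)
The plan is to factor the character probability as the Plancherel probability times a ratio of normalized characters, then handle each factor separately using results already in the paper. Directly from \eqref{stat} and \eqref{Plan},
\[
p_\lambda^{(N)}(t) = p_\lambda^{(N)}(0) \cdot \frac{\chi_\lambda(e^t)/\dim V_\lambda}{\prod_k \bigl(\chi_{\nu_k}(e^t)/\dim V_{\nu_k}\bigr)^{N_k}}.
\]
The Plancherel factor already has the intermediate asymptotic \eqref{plas} derived in Section \ref{planch}, with the Plancherel scaling variable $a$ identified with $b$ (both come from $\lambda=\sqrt{x/\epsilon}\,b$). Thus the task reduces to computing the asymptotic of the ratio in the scaling $t=\sqrt{\epsilon/x}\,u$.

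For the denominator I would Taylor-expand $\ln \chi_{\nu_k}(e^t)$ around $t=0$. The linear term vanishes by Weyl invariance, and Lemma \ref{L1} identifies the quadratic term as $\frac{c_2(\nu_k)}{2\dim \gf}\sum_{ab} t_a B_{ab} t_b$. Substituting $t=\sqrt{\epsilon/x}\,u$ and $N_k=\tau_k/\epsilon$, the $\epsilon$-factors cancel; summing over $k$ and invoking $x=\frac{1}{\dim \gf}\sum_k \tau_k c_2(\nu_k)$ collapses the product to $e^{(u,u)/2}(1+o(1))$. For the numerator, the key observation is that the scalings of $\lambda$ and $t$ are conjugate, so $(w(\lambda),t)=(w(b),u)$ \emph{exactly}, independent of $\epsilon$. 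The Weyl character formula then gives a numerator $\sum_w (-1)^w e^{(w(b),u)}$ in the limit, since $(w(\rho),t)=O(\sqrt{\epsilon})$ is negligible. The Weyl denominator expands via $2\sinh(s/2)=s+O(s^3)$ as $\Delta(t)=(\epsilon/x)^{|\Delta_+|/2}\prod_{\alpha\in\Delta_+}(u,\alpha)(1+o(1))$, while the Weyl dimension formula gives $\dim V_\lambda = (x/\epsilon)^{|\Delta_+|/2}\prod_{\alpha\in\Delta_+}(b,\alpha)/(\rho,\alpha)(1+o(1))$, and the two powers of $\epsilon^{|\Delta_+|/2}$ cancel in the ratio.

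Assembling the three pieces and canceling $\prod_{\alpha\in\Delta_+}(\rho,\alpha)$ together with one power of $\prod_{\alpha\in\Delta_+}(b,\alpha)$ that appear both in numerator and denominator produces exactly the stated formula, after applying the Weyl-invariance identity $\sum_w (-1)^w e^{(w(b),u)} = \sum_w (-1)^w e^{(b,w(u))}$. The step requiring the most care is the use of \eqref{plas}: it must hold uniformly as $\xi=\sqrt{\epsilon x}\,b$ varies on compact sets rather than only pointwise for fixed $b$, which is the uniformity assumption stated in Section \ref{planch} and justified in \cite{TZ}. Once that uniformity is granted, the remainder of the argument is algebraic bookkeeping with the standard Weyl character, denominator, and dimension formulas.
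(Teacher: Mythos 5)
Your argument is correct, and it reaches the stated formula by a route that differs in organization from the paper's. The paper's Section \ref{Int} substitutes the multiplicity asymptotic of Theorem \ref{con-1} directly into \eqref{stat1} and re-expands every ingredient at small $\xi$: it shows $x_a=\sqrt{\epsilon/x}\,b_a+o(\sqrt\epsilon)$, degenerates $\Delta(x)$, $S(\tau,\xi)$ and $K$ (via Lemma \ref{L1}, giving $K=B/x$ and $e^{S/\epsilon}\to e^{-(b,b)/2}$), expands $\chi_\lambda(e^t)$ by the Weyl formula, and evaluates $\prod_k\chi_{\nu_k}(e^t)^{N_k}$ through the Schur-lemma trace identity. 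You instead factor $p_\lambda^{(N)}(t)=p_\lambda^{(N)}(0)\cdot\bigl(\chi_\lambda(e^t)/\dim V_\lambda\bigr)\prod_k\bigl(\chi_{\nu_k}(e^t)/\dim V_{\nu_k}\bigr)^{-N_k}$ and reuse the already-derived intermediate Plancherel asymptotic \eqref{plas}, so the only new analysis is the character ratio; your treatment of the denominator (vanishing linear term, quadratic term from Lemma \ref{L1}, cancellation against $x=\frac{1}{\dim\gf}\sum_k\tau_k c_2(\nu_k)$ to give $e^{(u,u)/2}$) and of the numerator (exact cancellation of the conjugate scalings in $(w(\lambda),t)=(w(b),u)$, with $(w(\rho),t)=O(\sqrt\epsilon)$ negligible, plus the $\sinh$ and Weyl dimension expansions) reproduces exactly the paper's intermediate computations, and the final bookkeeping, including $\sum_w(-1)^we^{(w(b),u)}=\sum_w(-1)^we^{(b,w(u))}$, is right. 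What your factorization buys is conceptual economy: it isolates the deformation away from the Plancherel case entirely in the ratio of normalized characters, and it avoids re-deriving the small-$\xi$ behavior of $S$, $K$, $\Delta(x)$, $e^{-(\rho,x)}$, since that work is already packaged in \eqref{plas}; the cost is that your proof inherits whatever uniformity is needed for \eqref{plas} in the regime $\xi=\sqrt{\epsilon x}\,b$ with $b$ in compact sets, a caveat you correctly flag and which is the same assumption (uniformity of the multiplicity asymptotic up to the scaling region, justified via \cite{TZ}) that underlies the paper's own derivation. One cosmetic remark: the paper's displayed result \eqref{p2} has $\sqrt{\det B}$ in the denominator, a typo; your computation, like the theorem statement and \eqref{plas}, correctly places $\sqrt{\det B}$ in the numerator.
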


\begin{proof}
The pointwise asymptotic of the multiplicity function is given by Theorem \ref{con-1}.
As it was shown in section \ref{planch}, the variables $x_a$ in the formula for the asymptotic  
for small $\xi_a$ are 
\[
x_b=\frac{\xi_b}{x}+O(\xi^2),
\]
where $x=\frac{1}{dim(\gf)}\sum_{k=1}^m \tau_k c_2(\nu_k)$ is as in section \ref{planch}.
In terms of the variable $b$ we have
\[
x_a=\sqrt{\frac{\epsilon}{x}}b_a+o(\sqrt{\epsilon}) .
\]

As a consequence for the Weyl denominator we have
\[
\Delta(x)=\prod_{\alpha\in \Delta_+} (e^{\frac{(x,\alpha)}{2}}-e^{-\frac{(x,\alpha)}{2}})=
\prod_{\alpha\in \Delta_+}
\left(
\sqrt{\frac{\epsilon}{x}}
\right)(a,\alpha)=
\left(\sqrt{\frac{\epsilon}{x}}
\right)^{|\Delta_+|}
\prod_{\alpha\in \Delta_+}
(a,\alpha) .
\] 
It is also easy to find the asymptotic of the character:
\begin{equation}
\chi_{\lambda}(e^t)=\frac{\sum_w(-1)^we^{(\lambda+\rho,w(t))}}{\Delta(t)}=
\frac{\sum_w(-1)^we^{(b,w(u))}}{\left(\sqrt{\frac{\epsilon}{x}}\right)^{|\Delta_+|}
\prod_{\alpha\in \Delta_+}
(u,\alpha)},   
\end{equation}
For small $\xi$ the large deviation rate function is
\[
S(\xi)=
-\frac{1}{2x}
\xi_bB_{bc}\xi_c
 +o(\xi^2),
\]
In terms of variable $b$ $\xi_a=\sqrt{\epsilon x} b_a$ and we have:
\[
S(\xi)=
-\frac{\epsilon}{2}
b_aB_{ac}b_c +o(\epsilon),
\]
For the matrix $K_{ab}$ as in section \ref{planch} we have:
\[
K_{ab}=\frac{B_{ab}}{x}+O(\epsilon),
\]
and therefore
\[
\det K=x^{-r}\det B(1+o(1).
\]

Finally, taking into account that $t=\sqrt{\frac{\epsilon}{x}}u$ we have
\[
\prod_k \chi_{V_{\nu_k}}(e^t)^{N_k}=\exp(\sum_k \frac{\tau_k}{\epsilon}\ln(dim(V_{\nu_k})+\frac{1}{2}(u,u)) .
\]
Here we used the identity
\[
tr_V(xy)=\frac{c_2(V)dim(V)}{dim{\gf}}(x,y) .
\]
where we assume that $V$ is irreducible, $c_2(V)$ is the value of the second Casimir
on $V$ and $(x,y)$ is the scalar product with respect to the Killing form. The identity follows from
the Schur lemma.

Now after the substitution of these asymptotical expressions into (\ref{stat1}) we obtain the desired asymptotic:
\begin{equation}\label{p2}
p_\lambda^{(N)}(t)=\left(\frac{\epsilon}{x}\right)^{\frac{r}{2}}\frac{1}{(2\pi)^{\frac{r}{2}}\sqrt{\det 
B}}
\frac{\prod_{\alpha\in \Delta_+}(b,\alpha)}{\prod_{\alpha\in \Delta_+}
(u,\alpha)}
\sum_{w\in W}(-1)^we^{(b,w(u))}e^{-\frac{(b,b)}{2}-\frac{(u,u)}{2}}
(1+o(1)) .
\end{equation}

\end{proof}

Let us check now that the function 
\[
p(b,u)=\frac{\sqrt{\det 
B}}{(2\pi)^{\frac{r}{2}}}
\frac{\prod_{\alpha\in \Delta_+}(b,\alpha)}{\prod_{\alpha\in \Delta_+}
(u,\alpha)}
\sum_{w\in W}(-1)^we^{(b,w(u))}e^{-\frac{(b,b)}{2}-\frac{(u,u)}{2}}
\]
is the density of a probability measure on $\hf^{\star}\geq 0$ ($b=\sum_ab_a\alpha_a,  b_a\geq 0$). It is clear that $p(b,u)\geq 0$,
so we only have to check the normalization. We naturally expect this to be true since $\sum_\lambda p^{(N)}_\lambda(t)=1$.
The following lemma checks it explicitly.

\begin{lemma} 
The function $p(b,u)$ is the density of a probability measure on $\hf^{\star}\geq 0$, i.e.
\[
    \int_{\hf\mathbb{R}^r\geq 0}p(b,u)db=1,
\]
where $db_1\dots db_r$ is the Euclidean measure on $\mathbb{R}^r$.
\end{lemma}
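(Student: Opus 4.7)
The plan is to exploit the Weyl symmetry of the integrand to extend the integral from the positive chamber $\hf^*_{\geq 0}$ to all of $\hf^*$, then to collapse the Weyl-group sum by a change of variables, and finally to evaluate the resulting Gaussian integral using harmonicity of the product of positive roots.

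First I would observe that the integrand
\[
F(b)=\prod_{\alpha\in\Delta_+}(b,\alpha)\cdot\sum_{w\in W}(-1)^w e^{(b,w(u))}\cdot e^{-(b,b)/2}
\]
is $W$-invariant in $b$: under $b\mapsto w_0(b)$ the product $\prod_{\alpha>0}(b,\alpha)$ picks up a sign $(-1)^{w_0}$, the sum $\sum_w(-1)^w e^{(b,w(u))}$ picks up the same sign by reindexing, and the Gaussian factor is unchanged. Since the chambers tile $\hf^*$ up to a measure-zero set, this gives
\[
\int_{\hf^*_{\geq 0}}F(b)\,db=\frac{1}{|W|}\int_{\hf^*}F(b)\,db.
\]

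Next, for each $w\in W$ I would substitute $b\mapsto w(b)$ inside the integral $\int_{\hf^*}\prod_{\alpha>0}(b,\alpha)\,e^{(b,w(u))}e^{-(b,b)/2}\,db$. The Killing form and the Euclidean measure on $\hf^*$ are $W$-invariant, while $\prod_{\alpha>0}(b,\alpha)$ transforms by $(-1)^w$, so this integral equals $(-1)^w\,I(u)$ where
\[
I(u)=\int_{\hf^*}\prod_{\alpha\in\Delta_+}(b,\alpha)\,e^{(b,u)}e^{-(b,b)/2}\,db.
\]
Summing against $(-1)^w$ eliminates the signs and produces a factor of $|W|$, which cancels the $1/|W|$ above, leaving
\[
\int_{\hf^*_{\geq 0}}F(b)\,db=I(u).
\]

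To compute $I(u)$, I would complete the square via $b=c+u$ to obtain
\[
I(u)=e^{(u,u)/2}\int_{\hf^*}P(c+u)\,e^{-(c,c)/2}\,dc,\qquad P(b):=\prod_{\alpha\in\Delta_+}(b,\alpha).
\]
The key structural fact is that $P$ is \emph{harmonic} with respect to the Laplacian $\Delta=\sum_{ab}(B^{-1})^{ab}\pa_a\pa_b$ dual to the Killing form. Indeed, $\Delta$ is $W$-invariant and lowers degree by $2$, so $\Delta P$ would be a $W$-antisymmetric polynomial of degree $|\Delta_+|-2$; but $P$ has the minimal degree of any $W$-antisymmetric polynomial on $\hf^*$ (it divides every such polynomial), so $\Delta P=0$. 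By the Gaussian Wick/Mehler identity $\int P(c+u)e^{-(c,c)/2}dc=(e^{\Delta/2}P)(u)\cdot(2\pi)^{r/2}/\sqrt{\det B}$, harmonicity collapses this to $P(u)\cdot(2\pi)^{r/2}/\sqrt{\det B}$. Putting the pieces together,
\[
\int_{\hf^*_{\geq 0}}p(b,u)\,db=\frac{\sqrt{\det B}}{(2\pi)^{r/2}}\cdot\frac{e^{-(u,u)/2}}{\prod_{\alpha>0}(u,\alpha)}\cdot e^{(u,u)/2}\prod_{\alpha>0}(u,\alpha)\cdot\frac{(2\pi)^{r/2}}{\sqrt{\det B}}=1.
\]

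The only step that requires care is the harmonicity of $P$; everything else is bookkeeping with Weyl symmetry and a standard Gaussian computation. An alternative, if one prefers to avoid the harmonicity argument, is to write $P(b)e^{(b,u)}=P(\pa_u)e^{(b,u)}$ and to pull $P(\pa_u)$ outside the Gaussian integral in $u$, arriving at the same identity; I would mention this as a parallel route.
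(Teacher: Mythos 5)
Your proof is correct, and it shares the paper's overall skeleton --- use $W$-invariance of the integrand to replace the integral over the positive chamber by $\frac{1}{|W|}\int_{\hf^{*}}$, then reduce everything to a Gaussian computation --- but it handles the decisive final step differently. The paper keeps the sum over $W$, writes $\prod_{\alpha\in\Delta_+}(b,\alpha)\,e^{(b,u)}=D_u e^{(b,u)}$ with $D_u=\prod_{\alpha\in\Delta_+}\bigl(\tfrac{\partial}{\partial u},\alpha\bigr)$, pulls $D_u$ through the Gaussian integral, and then invokes, without proof, the antisymmetrization identity $\sum_w(-1)^wJ(w(u))=|W|\prod_{\alpha\in\Delta_+}(u,\alpha)$ for $J(u)=e^{-(u,u)/2}D_ue^{(u,u)/2}$. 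You instead collapse the Weyl sum first, by the substitution $b\mapsto w(b)$ (using antisymmetry of $P(b)=\prod_{\alpha\in\Delta_+}(b,\alpha)$ together with invariance of the Killing form and of the Euclidean measure), and evaluate the single remaining integral by completing the square and using harmonicity of $P$ for the Killing-dual Laplacian via the Wick/heat-kernel formula $\int P(c+u)e^{-(c,c)/2}dc=(e^{\Delta/2}P)(u)\,(2\pi)^{r/2}/\sqrt{\det B}$. In effect you prove exactly the identity the paper labels ``easy to check'': harmonicity gives $e^{\Delta/2}P=P$, i.e.\ $J(u)=P(u)$, which is in fact a bit stronger than the paper needs (after antisymmetrization only the top-degree part of $J$ survives, so the paper's route could get by with $J=P+{}$lower-order terms). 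What your route buys is self-containedness --- the one nontrivial input, harmonicity of the Weyl skew polynomial, is fully justified by your divisibility argument --- at the cost of needing that input in full force; the paper's route is shorter on the page but leaves its key identity unverified. One shared caveat, attributable to the paper's setup rather than to your argument: the replacement of $\int_{\hf^{*}_{\geq 0}}$ by $\frac{1}{|W|}\int_{\hf^{*}}$ is exact when $\hf^{*}_{\geq 0}$ is read as the closed dominant chamber (a fundamental domain), whereas the paper parametrizes it as the simple-root cone $b=\sum_a b_a\alpha_a$, $b_a\geq 0$, which in general strictly contains the dominant chamber; your phrasing (``the chambers tile $\hf^{*}$'') implicitly adopts the correct reading.
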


\begin{proof}
Let us calculate the normalization constant $N$ explicitly:
\begin{eqnarray*}
N=    \frac{\sqrt{\det 
B}}{(2\pi)^{\frac{r}{2}}}
\int_{\hf^{\star}\geq 0}
\prod_{\alpha\in \Delta_+}
\frac{(b,\alpha)}
{(u,\alpha)}
\sum_w(-1)^we^{(b,w(u))}
e^{-\frac{(b,b)}{2}-\frac{(u,u)}{2}}db\\
\frac{\sqrt{\det 
B}}{(2\pi)^{\frac{r}{2}}}
\frac{1}{\prod_{\alpha\in \Delta_+}
(u,\alpha)}
\frac{1}{|W|}
\int_{\hf^{\star}}
\prod_{\alpha\in \Delta_+}
(b,\alpha)
\sum_w(-1)^we^{(b,w(u))}
e^{-\frac{(b,b)}{2}-\frac{(u,u)}{2}}db .
\end{eqnarray*}

It is easy to compute the Gaussian integral
\begin{eqnarray*}
\frac{\sqrt{\det 
B}}{(2\pi)^{\frac{r}{2}}}\int_{\hf^{\star}}
e^{-\frac{(b,b)}{2}+(b,u)}db=\\
=\frac{\sqrt{\det 
B}}{(2\pi)^{\frac{r}{2}}}\int_{\hf^{\star}}
e^{-\frac{(b-u,b-u)}{2}+\frac{(u,u)}{2}}db= 
e^{\frac{(u,u)}{2}} .
\end{eqnarray*}
We have
\[
   \prod_{\alpha\in \Delta_+}
(b,\alpha)
e^{(b,u)} =
\prod_{\alpha\in \Delta_+}
(b,\alpha)
e^{b_aB_{ab}u_b}=
\prod_{\alpha\in \Delta_+}
(b,\alpha)
e^{b_a u^a}=
\prod_{\alpha\in \Delta_+}
\left(\frac{\partial}{\partial u},\alpha\right)e^{(b,u)}=
D(u)e^{(b,u)},
\]
where $D_u=\prod_{\alpha\in \Delta_+}
\left(\frac{\partial}{\partial u},\alpha\right)$.

The normalization constant $N$ now can be written as
\[
N=\frac{1}{|W|\prod_{\alpha\in \Delta_+}
(u,\alpha)}\sum_{w}(-1)^wJ(w(u)) ,
\]
where
\[
J(u)=e^{-\frac{(u,u)}{2}}D_ue^{\frac{(u,u)}{2}} .
\]
The following identity is easy to check:
\[
\sum_{w}(-1)^wJ(w(u))=|W|\prod_{\alpha\in \Delta_+}
(u,\alpha) .
\]

Thus
\[
N=1 .
\]
\end{proof}

Note that (\ref{p2}) implies a weak convergence of the sequence of character measures $\{p^{(N)}_\lambda(t)\}$ to $p(b,u)db$.

\section{Nonlinear PDE for the rate function $S(\tau, \xi)$}
\label{PDE}
Let $V=\oplus_{\mu}V(\mu)$ be the weight decomposition of $V$ and $d_{\mu}=\dim(V_{\mu})$. For the weight $\lambda$, which is well inside the main Weyl chamber, we have the following decomposition of the tensor product
\begin{equation}
\label{tenn}
    V\otimes V_{\lambda}\simeq
    \oplus_{\mu\in wt(V)}V_{\lambda+\mu}^{\oplus d_{\mu}} .
\end{equation}
were $wt(V)$ is the set of weights which occur in the representation $V$.

Let $ m_\lambda^{(N)}$ be the multiplicities in
\[
    V^{\otimes N}\simeq 
    \oplus_{\lambda}
    V_{\lambda}
    ^{\oplus m_\lambda^N} .
\]
the decomposition (\ref{tenn}) gives the difference equation for multiplicities
\[
    m_\lambda^{(N+1)}=\sum_{\mu\in wt(V)}d_{\mu}m_{\lambda-\mu}^{(N)} ,
\]
where we assume again that $\lambda$ is well inside the main Weyl chamber.

Now, passing to the limit $\epsilon\to 0$ in this difference equation with $N=\frac{\tau}{\epsilon}$, $\lambda=\frac{\xi}{\epsilon}$ we derive the following nonlinear PDE for the large deviation rate function $S$:
\begin{equation}
\label{part}
 e^{\partial_\tau S}=\sum_{\mu\in wt(V)}d_\mu e^{-\sum_a\mu_a\frac{\partial S}{\partial \xi_a}} .   
\end{equation}
Here we used the basis of simple roots in $\hf^*$: $\mu=\sum_a\mu_a\alpha_a, \;\;\xi=\sum_a\xi_a\alpha_a$.

The following proposition is a direct verification of the equation (\ref{part}) for $S(\tau,\xi)$ determined by (\ref{s}) .
\begin{proposition}
The function $S(\tau,\xi)$ defined in (\ref{Sf}) satisfies the differential equation (\ref{part}).
\begin{proof}
By definition
\[
    S(\tau,\xi)=\tau\ln\chi_{\nu}(e^x)-(x,\xi) ,
\]
where $x$ is the unique solution to the equation:
\[
\tau\frac{\partial}{\partial x_a}\chi_{\nu}(e^x)=\chi_{\nu}(e^x)\sum_b B_{ab}\xi_b.
\]
Differentiating this with respect to $\tau$ we have
\[
\frac{\partial}{\partial x_a}\ln\chi_{\nu}(e^x)+\frac{\partial x_b}{\partial \tau}\frac{\partial^2}{\partial x_a \partial x_b}f(\tau,x)=0.
\]
i.e.
\[
    D_{ab}\frac{\partial x_b}{\partial \tau}=-\frac{1}{\tau}B_{ab}
    \xi_b .
\]
From here we obtain
\[
    \partial_{\tau}S=\ln\chi_{\nu}(e^x)+\tau\frac{\partial x_b}{\partial \tau}\frac{\partial }{\partial x_b}\ln\chi_{\nu}(e^x)-
    \frac{\partial x_a}{\partial \tau}B_{ab}\xi_b=\ln\chi_{\nu}(e^x) .
\]
On the other hand
\[
    \frac{\partial S}{\partial\xi_a}=
      \frac{\partial x_b}{\partial\xi_a}\tau
      \frac{\partial }{\partial x_b}\ln\chi_{\nu}(e^x)-
      \frac{\partial x_b}{\partial\xi_a}B_{bc}\xi_c-x_bB_{ba} = -x_bB_{ba} .
\]
i.e.
\[
    e^{-\mu_a\frac{\partial S}{\partial \xi_a}}=e^{(\mu,x)}
\]
Thus, both sides of (\ref{part}) are equal to $\chi_\nu(e^x)$ and we verified (\ref{part}).
\end{proof}
\end{proposition}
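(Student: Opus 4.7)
I would verify (\ref{part}) by a direct envelope-theorem computation of the two partials $\partial_\tau S$ and $\partial_{\xi_a} S$, and then recognize that both sides of the PDE collapse to $\chi_\nu(e^x)$ evaluated at the critical point $x$ Legendre-conjugate to $\xi$. Since the PDE concerns a single factor ($m = 1$), I take $f(\tau, x) = \tau \ln \chi_\nu(e^x)$ throughout.

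First, I would differentiate the representation $S(\tau,\xi) = f(\tau,x) - (x,\xi)$, with $x = x(\tau,\xi)$, in $\tau$. The chain rule produces a term proportional to $\partial_{x_a} f(\tau, x) - \sum_b B_{ab}\xi_b$, which vanishes identically by the defining critical equation (\ref{tt}). Thus only the explicit $\tau$-dependence of $f$ contributes, giving
\[
\partial_\tau S = \partial_\tau f(\tau, x) = \ln \chi_\nu(e^x).
\]
An identical envelope argument in $\xi_a$ yields
\[
\partial_{\xi_a} S = -\sum_b B_{ab} x_b,
\]
the $x$-derivative terms again collapsing against the critical equation.

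Substituting these into (\ref{part}), the left-hand side becomes $e^{\ln \chi_\nu(e^x)} = \chi_\nu(e^x)$. On the right, using the symmetry of the symmetrized Cartan matrix,
\[
-\sum_a \mu_a \frac{\partial S}{\partial \xi_a} = \sum_{a,b} \mu_a B_{ab} x_b = (\mu, x),
\]
so the right-hand side equals $\sum_{\mu \in wt(V)} d_\mu e^{(\mu, x)}$, which is precisely the weight-space expansion of the character $\chi_\nu(e^x)$. Both sides match.

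The computation is entirely routine; the only thing to be careful about is the sign and index bookkeeping in the Legendre transform and in the Killing-form pairing written via $B$. There is no analytic obstacle: the envelope cancellations are exact, and the weight expansion of the character is a definitional identity, so the verification reduces to a short direct calculation.
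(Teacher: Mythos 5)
Your proposal is correct and follows essentially the same route as the paper: both compute $\partial_\tau S=\ln\chi_\nu(e^x)$ and $\partial_{\xi_a}S=-\sum_b B_{ab}x_b$ from the Legendre representation of $S$ (the paper writes out the implicit differentiation of the critical-point equation explicitly, while you invoke the envelope cancellation directly, which is the same cancellation), and then both identify the right-hand side of the PDE with the weight expansion $\sum_{\mu}d_\mu e^{(\mu,x)}=\chi_\nu(e^x)$. No gaps; the argument is sound as stated.
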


\section{Conclusion}
\label{concl}

This paper demonstrats that in large tensor products
the statistic of irreducible components with respect to the 
character distribution almost does not depend
on which representations being multiplied. But it depends significantly on
whether the parameter $t$ in the character distribution is generic or special. 

Note that one can associate natural Markov process with 
the decomposition of tensor powers $V^{\otimes N}$ of a 
finite dimensional $\gf$-module $V$. The transition probabilities in 
such a process are 
\[
M_{\lambda, \mu}=\frac{\chi_\lambda(e^t)b_{\lambda,\mu}}{\chi_{\mu}(e^t)\chi(e^t)}
\]
where $\chi(e^t)$ is the character of the representation $V$ evaluated at $e^t$
and $b_{\lambda,\mu}$ are multiplicities of irreducible components in the tensor product
\[
V\otimes V_{\lambda}\simeq  \oplus_\mu V_\mu^{\oplus_\mu b_{\lambda, \mu}}
\]
Note that if $\lambda$ is sufficiently inside the positive Weyl chamber, $b_{\lambda, \lambda-\mu}=d_\mu$
where $d_\mu$ is the multiplicity of weight $\mu$ in $V$.

The character probability distribution (\ref{stat}) is a result of the Markov evolution of 
character measure of the trivial representation:
\[
p_\lambda^{(0)}(t)= \delta_{\lambda, 0}
\]
i.e.
\[
p^{(N)}=M^Np^{(0)}
\]

This paper can be regarded as a study of this Markov process\footnote{Or of its slight generalization when we consider $\otimes_k V_{\nu_k}^{\otimes N_k}$ instead of $V^{\otimes N}$.} . This process 
also can be regarded as a random walk on a lattice domain. For results in
this direction see \cite{B1}; also \cite{M} and references therein.
From this point of view the results on the week convergence of character 
measures can be interpreted as follows:

\begin{itemize}
\item For regular $t$, as $N\to \infty$, the expectation value of $\lambda$ behave as 
$\mathbb{E}(\lambda)=N\nabla\ln(\chi(e^t))(1+o(1)$ where $\nabla$ is the gradient (with respect to
the Killing form on the weight space). As $N\to \infty$ the distribution converges to the Gaussian 
distribution around the expectation value with the dispersion behaving as $\sqrt{N}$.
\item For $t=0$ and $N\to \infty$ the expectation value of $\lambda$ vanishes and
the asymptotical distribution is given by (\ref{Pl}) rescaled such that the dispersion is
proportional to $\sqrt{N}$.
\end{itemize}

Similar interpretation can be given for the intermediate scaling.

Below we will outline some of the future directions and problems
in the asymptotic representation theory that are naturally related 
to this paper.

\begin{itemize}

\item  We studied the asymptotic of dimensions of irreducible components and
the corresponding probability distributions when $t$ is either regular or maximally irregular, i.e. $t=0$.
When $t$ is not regular the asymptotical measure is given by a different formula involving the centralizer
of the action of $W$ on $\xi$. These results will be presented in a separate publication.

\item Truncated  tensor products, also known as fusion products appear in representation  theory of quantum groups 
at roots of unity and plays an important role in constructing modular categories. The latter are 
instrumental in conformal filed theory and in topological quantum  filed theory. 
The study of statistics of irreducible components for truncated tensor products
is an important problem by itself and an important  step in understanding the
semiclassical limit of corresponding topological quantum field theory. See \cite{D} and
references therein for existing results on fusion products and corresponding random walks.

\end{itemize}

Other interesting  problems expanding the results announced here are related to multiplicities 
of irreducible components in tensor products of Lie superalgebras and of finite groups  of Lie type,
i.e. groups similar to $SL_n(F_p)$. In these cases, as well as in the case of quantum groups at roots of unity,
tensor products of irreducibles have both irreducible components and blocks of irreducibles.
The study of statistics of blocks in large tensor products is another interesting problem that is largely open
for investigation.

\end{document}